\documentclass[pdflatex,sn-mathphys-num]{sn-jnl}% Math and Physical Sciences Numbered Reference Style 
%%\documentclass[pdflatex,sn-mathphys-ay]{sn-jnl}% Math and Physical Sciences Author Year Reference Style
%%\documentclass[pdflatex,sn-aps]{sn-jnl}% American Physical Society (APS) Reference Style
%%\documentclass[pdflatex,sn-vancouver,Numbered]{sn-jnl}% Vancouver Reference Style
%%\documentclass[pdflatex,sn-apa]{sn-jnl}% APA Reference Style 
%%\documentclass[pdflatex,sn-chicago]{sn-jnl}% Chicago-based Humanities Reference Style

%%%% Standard Packages
%%<additional latex packages if required can be included here>

\usepackage{graphicx}%
\usepackage{multirow}%
\usepackage{amsmath,amssymb,amsfonts}%
\usepackage{amsthm}%
\usepackage{mathrsfs}%
\usepackage[title]{appendix}%
\usepackage{xcolor}%
\usepackage{textcomp}%
\usepackage{manyfoot}%
\usepackage{booktabs}%
\usepackage{algorithm}%
\usepackage{algorithmicx}%
\usepackage{algpseudocode}%
\usepackage{listings}%
%%%%

%%%%%=============================================================================%%%%
%%%%  Remarks: This template is provided to aid authors with the preparation
%%%%  of original research articles intended for submission to journals published 
%%%%  by Springer Nature. The guidance has been prepared in partnership with 
%%%%  production teams to conform to Springer Nature technical requirements. 
%%%%  Editorial and presentation requirements differ among journal portfolios and 
%%%%  research disciplines. You may find sections in this template are irrelevant 
%%%%  to your work and are empowered to omit any such section if allowed by the 
%%%%  journal you intend to submit to. The submission guidelines and policies 
%%%%  of the journal take precedence. A detailed User Manual is available in the 
%%%%  template package for technical guidance.
%%%%%=============================================================================%%%%

%% as per the requirement new theorem styles can be included as shown below
\theoremstyle{thmstyleone}%
\newtheorem{theorem}{Theorem}%  meant for continuous numbers
%%\newtheorem{theorem}{Theorem}[section]% meant for sectionwise numbers
%% optional argument [theorem] produces theorem numbering sequence instead of independent numbers for Proposition
% 
%%\newtheorem{proposition}{Proposition}% to get separate numbers for theorem and proposition etc.

\theoremstyle{thmstyletwo}%
\newtheorem{lemma}{Lemma}%
\newtheorem{corollary}{Corollary}%
\theoremstyle{thmstylethree}%

\raggedbottom
%%\unnumbered% uncomment this for unnumbered level heads

\begin{document}

\title[Cardinal functions and mappings associated with the space of quasi-continuous functions equipped with topology of point-wise convergence]{Cardinal functions and mappings associated with the space of quasi-continuous functions equipped with topology of point-wise convergence}

%%=============================================================%%
%% GivenName	-> \fnm{Joergen W.}
%% Particle	-> \spfx{van der} -> surname prefix
%% FamilyName	-> \sur{Ploeg}
%% Suffix	-> \sfx{IV}
%% \author*[1,2]{\fnm{Joergen W.} \spfx{van der} \sur{Ploeg} 
%%  \sfx{IV}}\email{iauthor@gmail.com}
%%=============================================================%%
\author*[1]{\fnm{Sanjay} \sur{Mishra}} \email{drsmishraresearch@gmail.com}

\author[2]{\fnm{Chander Mohan} \sur{Bishnoi}} \email{chandermohan.cm.b@gmail.com}
\equalcont{This author contributed equally to this work.}

%\author[1,2]{\fnm{Third} \sur{Author}}\email{iiiauthor@gmail.com}
%\equalcont{These authors contributed equally to this work.}

\affil*[1]{\orgdiv{Department of Mathematics,
Amity School of Applied Sciences}, \orgname{Amity University}, \orgaddress{\street{ Lucknow Campus}, \city{Lucknow}, \postcode{226028}, \state{Uttar Pradesh}, \country{India}}}

\affil[2]{\orgdiv{Department of Mathematics}, \orgname{Lovely Professional University}, \orgaddress{\street{Jalandhar - Delhi G.T. Road}, \city{Phagwara}, \postcode{144411}, \state{Punjab}, \country{India}}}

%\affil[3]{\orgdiv{Department}, \orgname{Organization}, \orgaddress{\street{Street}, \city{City}, \postcode{610101}, \state{State}, \country{Country}}}

%%==================================%%
%% Sample for unstructured abstract %%
%%==================================%%

\abstract{Cardinal functions provide valuable insight into the topological properties of spaces, helping to analyze and compare spaces in terms of their covering, convergence and separation properties. This paper focuses on investigating cardinal functions like network weight, Lindelöf degree, tightness, weak covering, pseudocharacter, and $i$-weight, for the spaces $Q_{P}(X)$ and $Q_{P}(X,Y)$ of quasi-continuous functions under the topology of point-wise convergence. In addition to these, we also investigate properties of restriction and induced maps associated with the spaces $Q_{P}(X)$ and $Q_{P}(X,Y)$.}

%%================================%%
%% Sample for structured abstract %%
%%================================%%

\keywords{Function space, Quasi-Continuous Functions, Weight, Network Weight, Tightness, Pseudocharacter, and Special Maps}

%%\pacs[JEL Classification]{D8, H51}

\pacs[MSC Classification]{54C35, 54A25, 54C05, and 54C10}

\maketitle

\section{Introduction}\label{sec1}
Before we delve deeper into the content of this paper, it is necessary for us to initially explore the important symbols and explanations that will be used. The following notation to be employed includes $X, Y$, and $Z$, which are to be treated as topological spaces unless explicitly indicated otherwise. The spaces $C_{P}(X)$ and $C_{P}(\mathbb{R})$ represent collections of continuous real-valued functions defined on the space $X$ and the set of real numbers $\mathbb{R}$, respectively, each equipped with the topology of point-wise convergence. Similarly, the spaces $Q_{P}(X)$ and $Q_{P}(\mathbb{R})$ represent collections of quasi-continuous real-valued functions defined on the space $X$ and the set of real numbers $\mathbb{R}$, respectively, each equipped with the topology of point-wise convergence. The spaces $F_{P}(X,Y)$ and $Q_{P}(X,Y)$ are collections of functions and quasi-continuous functions from space $X$ to space $Y$, respectively, each equipped with a topology of point-wise convergence. The first infinite (countable) cardinal no., uncountable (second uncountable) cardinal no., and arbitrary cardinal number are denoted by $\aleph_{0}$, $\aleph_{0}$, and $\eta$, respectively. The closure, interior, and complement of any set $A$ are denoted by $\overline{A}$, $A^{\circ}$, and $A^{C}$, respectively. The collection of all finite subsets of $X$ is denoted by $\mathcal{F}$.\\In 1899, Baire first found the condition for a function to be a quasi-continuous function in the paper \cite{baire1899}. During the study of continuity points of separately continuous functions from ${\mathbb{R}}^{2}$ to $\mathbb{R}$, he determined that a function of two variables is quasi-continuous if it is continuous in each variable separately. Later, the quasi-continuity introduced in the paper \cite{Kempisty1932} for real functions of several real variables was thoroughly and extensively tested by Kempisty in 1932. When we take a closer look, it turns out that the researchers found this study interesting for a few reasons. The two most important of these are: The first reason is that it provides a relatively good connection to continuity while being a more general concept. This relationship allows mathematicians to extend concepts and theorems from continuous functions to a broader class of functions. The second reason is that it has deep connections to mathematical analysis and topology. Understanding these functions helps in exploring and solving problems in these areas more comprehensively. The reader can see survey articles authored by Piotrowski, specifically \cite{Piotrowski1985} and \cite{Piotrowski1987} that encompass a range of intriguing outcomes in this area. However, these papers do not exclusively focus on this subject. In addition to these papers, readers are encouraged to see the survey paper \cite{Neubrunn1988} written by Neubrunn, which is a collection of interesting and important results on quasi-continuous functions. Quasi-continuous functions played an important role in the study of topological groups, the characterization of minimal usco, minimal cusco maps, and the CHART group, which is the key object for the study of topological dynamics. Additionally, recent research by Bishnoi and Mishra \cite{Bishnoi2023} in 2023 has demonstrated how strong forms of connectedness are preserved through quasi-continuous functions. For a more in-depth exploration of these applications, one can refer to \cite[Chapter 2]{Hola2021}.
Next, we aim to shed some light on the concise history of topological function spaces. The introduction of the theory of function spaces equipped with the topology of point convergence, now called $C_{P}$-theory, is attributed to Alexander Vladimirovich Arhangel’skii. In 1992, Arhangel’skii studied important results of $C_{P}$-theory in \cite{Arkhangelskii992}. Subsequently, numerous researchers dedicated their efforts to enhancing $C_{P}$-theory, bestowing upon it the elegance and magnificence it possesses today. A huge collection of results on $C_{P}$-theory along with many open problems provided by Tkachuk in \cite{Tkachuk2010, Tkachuk2014, Tkachuk2015, Tkachuk2016}. In 2018, McCoy et al. \cite{McCoy2018} collected general results on topological function spaces with different topologies such as uniform topology, fine topology, and graph topology in \cite{McCoy2018}. More recently, in 2023, Mishra and Bhaumik \cite{Mishra2023} and Aaliya and Mishra \cite{Aaliya2023} studied properties of topological function spaces under Cauchy convergence topology and regular topology, respectively.
Now let us see how the study of topological function space begins, where functions were considered as quasi-continuous functions. Hola and Holy studied various properties of the space of quasi-continuous functions under different topologies in the literature \cite{Hola2016, Hola2017, Hola2018}. Recently, in 2020, Hola and Holy studied cardinal invariants of the space $Q_{P}(X)$. Furthermore, in 2022 Kumar and Tyagi studied cardinal invariants of the space $Q_{P}(X,Y)$ in \cite{Kumar2022} a more general form. Extending these investigations, we present in this paper additional findings concerning cardinal invariants of the space $Q_{P}(X)$ and further study some maps on both $Q_{P}(X)$ and $Q_{P}(X,Y)$ spaces.
The structure of this paper is as follows: In the section \eqref{s:Preliminaries}, we recall some important definitions and results that help to understand our main results. In section \eqref{s:Cardinal functions for the spaces qpx and qpxy} we study cardinal functions like weight, network weight, Lindelöf degree, tightness, weak covering, pseudocharacter, and $i$-weight for the spaces $Q_{P}(X)$ and $Q_{P}(X,Y)$ under a certain condition on the space $X$. Further at the end in section \eqref{s:Mappings on the spaces qpx and qpxy}, we studied the restriction map and induced map for the spaces $Q_{P}(X)$ and $Q_{P}(X,Y)$. \section{Preliminaries}\label{s:Preliminaries}According to Neubrunn in \cite{Neubrunn1988}, a function $f \colon X \to Y$ is said to be quasi-continuous at $x \in X$ if for any open sets $U$ and $V$ such that $x \in U, f(x) \in V$ there exists a non-empty open subset $G$ of $U$ such that $f(G) \subset V$. The function $f$ is called quasi-continuous on $X$ if $f$ is quasi-continuous for all $x \in X$. It is easy to see that every continuous function is quasi-continuous, but the converse need not be true. For example, any only monotone left or right continuous function $f \colon \mathbb{R} \to \mathbb{R}$ is quasi-continuous but not continuous.

Now we are going to recall some definitions related to the topology of the function spaces.
\begin{enumerate}
\item The collection
\begin{gather*}
\mathcal{S} = \{S(x,U) \colon  x \in X, U \, \text{open in} \, Y\}, \,\text{where} \\
S(x,U) = \{f \in F(X,Y) \colon f(x) \in U \}
\end{gather*}
is subbase for the topology on the set $F(X,Y)$, called the topology of point-wise convergence.
\item The collections
\begin{gather*}
\mathcal{S}' = \{[x,U] \colon  x \in X, U \, \text{open in} \, Y\}, \,\text{where} \\
[x, U] = \{f \in Q(X,Y) \colon f(x) \in U \}
\end{gather*}
and
\begin{gather*}
\mathcal{B} =\{[x_{1}, \dots, x_{n}; U_{1}, \dots, U_{n}]\colon x_{i} \in X, U_{i} \, \text{open in} \, Y\}, \text{where} \\
 [x_{1}, \dots, x_{n}; U_{1}, \dots, U_{n}] = \{f \in Q(X,Y) \colon f(x_{i}) \in U_{i}, 1 \leq i \leq n\}
\end{gather*}
are respectively subbase and base for a topology on the set $Q(X,Y)$ which is called the topology of point-wise convergence. The space $Q_{P}(X,Y)$ is the subspace of $F_{P}(X,Y)$.
\item For metric space $Y$ and $\mathcal{F}$ be collection of all finite subset of $X$. Thus, for $\epsilon>0$ the set
\[W(f,A,\epsilon) = \{g \in Q(X,Y) \colon d(f(x), g(x)) < \epsilon, \forall x \in A \in \mathcal{F} \}\] is an open neighbourhood of $f$ in $Q_{P}(X,Y)$.
\end{enumerate}

In the study of topology, cardinal functions are numerical characteristics that measure various aspects of the size, covering, and convergence properties of topological spaces. It helps to understand and compare the topological properties of different spaces based on their cardinalities. In this paper, our main objective is to discuss the cardinal functions for the spaces $Q_{P}(X)$ and $Q_{P}(X,Y)$ under certain conditions on the space $X$. For better understanding, let us first recall the cardinal functions for the space $X$. If $X$ is topological space, then
\begin{enumerate}
\item the weight of the space $X$ is
\begin{equation}\label{eq:Weight of X}
w(X) = \aleph_{0} + \min\{|\mathcal{B}| \colon \mathcal{B}\, \text{is a basis in} \, X\}
\end{equation}
\item the network weight of the space $X$ is
\begin{equation}\label{eq:Newtwork weight of X}
nw(X) = \aleph_{0} + \min \{|\mathcal{N}|\colon \mathcal{N} \,\text{is a network of} \, X\}
\end{equation}
where the network of a space $X$ is a collection $\mathcal{N}$ of subsets of $X$ such that for any $ x \in X$ and every open set $U$ containing $x$, there exists $N \in \mathcal{N}$ such that $ x \in N \subset U$.
\item the $i$-weight of the space $X$ is
\begin{equation}\label{eq:i-weight of X}
iw(X) = \aleph_{0} + \min\{w(Y) \colon \exists \, \text{continuous and bijective} \, f \colon X \to Y\}
\end{equation}
\item the weak covering number of the space $X$ is
\begin{equation}\label{eq:Weak covering number of X}
wc(X) = \aleph_{0} + \min\{|\mathcal{J}|\colon \mathcal{J}\, \text{is a weak covering of} \, X\}
\end{equation}
where a weak covering of a space $X$ is a collection $\mathcal{J}$ of open subsets of $X$ such that $\overline{\bigcup \mathcal{J}} = X$.
\item the tightness of the space $X$ is
\begin{gather}\label{eq:Tightness of X}
 t(X) = \{t(x,X) \colon x \in X\}, \, \text{where}  \\
 t(x,X) = \aleph_{0} + \sup\{a(x, Y) \colon x \in \overline{Y} \subset X\}
\end{gather}
is tightness at the point $x \in X$  and $a(x, Y) = \min\{|Z| \colon Z \subset Y, x \in \overline{Z}\}$.
\item the density of the space $X$ is
\begin{equation}\label{eq:Density of X}
d(X) = \aleph_{0} + \min\{|D|\colon D \, \text{is a dense set in} \, X\}
\end{equation}
\item the character of the space $X$ is
\begin{gather*}\label{eq:character of X}
\chi(X) = \sup\{\chi(x, X) \colon x \in X\}, \, \text{where}   \\
\chi(x, X) = \aleph_{0} + \min \{|\mathcal{O}| \colon \mathcal{O} \, \text{is local base at} \, x\}
\end{gather*}
is character of the point $x \in X$.
\item the pseudocharacter of the space $X$ is
\begin{gather}\label{eq:Pseudocharacter of X}
\psi(X) = \sup\{\psi(x, X) \colon x \in X\}, \, \text{where} \\
\psi(x, X) = \aleph_{0} + \min \{|\gamma| \colon \gamma \, \text{is a family of open sets in} \, X \, \text{such that} \, \cap \gamma = \{x\}\}
\end{gather}
is pseudocharacter of the point $x \in X$. The pseudocharacter of the subset $A$ of the space $X$ is
\begin{equation}\label{Pseudocharacter of subset A of X}
\psi(A, X) = \min \{|\mathcal{U}| \colon \mathcal{U} \subset  T(X), \bigcup \mathcal{U} = A\}
\end{equation}
\end{enumerate}
We have an interrelation between cardinal functions.
\begin{gather}\label{eq:Interrelation between cardinal functions}
d(X) \leq nw(X) \leq w(X),\, \text{where}\, X \, \text{is arbitrary space} \\
d(X) = nw(X) = w(X), \, \text{where}\, X \, \text{is metrizable space}
\end{gather}
For a more comprehensive understanding of cardinal functions associated with topological spaces, refer to \cite{Kunen2014}.

Let's look at some recent interesting results for cardinal invariants of the space $Q_{P}(X)$ and $Q_{P}(X,Y)$. In 2020, Hola and Holy established the following results in \cite{Hola2020}:
\begin{enumerate}
\item For space $X$,
\begin{gather}\label{result1}
\chi(Q_{P}(X)) = \pi_{\chi}(Q_{P}(X)) = |X| \\
d(Q_{P}(X)) \leq w(X) \\
w(Q_{P}(X)) = |X|
\end{gather}
\item For Tychonoff space $X$,
\begin{gather}\label{result2}
d(Q_{P}(X)) = iw(X)
\end{gather}
\item For regular space $X$,
\begin{gather}\label{result3}
nw(X) \leq nw(Q_{P}(X)) \leq |X|
\end{gather}
\item For Tychonoff space $X$ \cite[Theorem 4.1.2]{McCoy1988},
\begin{gather}\label{result3}
nw(X) = nw(C_{P}(X)) \leq   nw(Q_{P}(X))
\end{gather}
\item From \cite[Example 4.1.4]{Hola2020},
\begin{equation}\label{eq:1}
nw(\mathbb{R}) = w(\mathbb{R}) = \aleph_{0} \neq nw(Q_{P}(\mathbb{R})) = |\mathbb{R}| = \aleph_{1}
\end{equation}
Therefore, for arbitrary space $X$,
\begin{equation}\label{eq:2}
nw(X) \neq nw(Q_{P}(X))
\end{equation}
\end{enumerate}

Based on the ongoing research initiated by Hola and Holy in \cite{Hola2020}, Kumar and Tyagi extended the investigation in 2022 by describing the following cardinal invariant of the space $Q_{P}(X,Y)$ with a broader perspective in \cite{Kumar2022}.

\begin{enumerate}
\item For Hausdorff space $X$ and non-trivial $T_{1}$-space $Y$,
\begin{equation}\label{eq:Cardinal invariants of spaces qpxy 1}
|X| \leq \pi_{\chi}(Q_{P}(X,Y)) \leq \chi(Q_{P}(X,Y)) \leq \omega(Q_{P}(X,Y))
\end{equation}
\item For infinite Hausdorff space $X$ and non-trivial second countable $T_{1}$-space $Y$,
\begin{equation}\label{eq:Cardinal invariants of spaces qpxy 2}
|X| = \pi_{\chi}(Q_{P}(X,Y)) = \chi(Q_{P}(X,Y)) = \omega(Q_{P}(X,Y))
\end{equation}
\item For Hausdorff space $X$,
\begin{gather}\label{eq:Cardinal invariants of spaces qpxy 3}
d(Q_{P}(X,Y)) \leq \omega(X) \cdot d(Y) \text{and} \\
Q_{P}(X,Y) \, \text{is dense in} \, F_{P}(X,Y)
\end{gather}
\item For second countable Hausdorff space $X$ and separable space $Y$,
\begin{equation}\label{eq:Cardinal invariants of spaces qpxy 3}
Q_{P}(X,Y) \, \text{is separable}
\end{equation}
\item For regular space $X$ and non-trivial space $Y$,
\begin{equation}\label{eq:Cardinal invariants of spaces qpxy 4}
d(X) \leq \psi (Q_{P}(X,Y))
\end{equation}
\item For Tychonoff space $X$ and separable space $Y$, \cite[Problem 034]{Tkachuk2010}
\begin{equation}\label{eq:Cardinal invariants of spaces qpxy}
C_{P}(\mathbb{R}) \, \text{is dense in} \, Q_{P}(\mathbb{R})
\end{equation}
\item For Hausdorff space $X$, separable space $Y$ and $\mathcal{F}$ is locally finite family of nonempty open subsets of $Q_{P}(X,Y)$, $\mathcal{F}$ is countable.
\end{enumerate}

For more details on cardinal invariants for the space of continuous functions, see \cite{Arkhangelskii992, Tkachuk2010}.

Let us look at some important lemmas which help us to prove the main results in the next section.
\begin{lemma}\label{l:Condition for quasicont}\cite[Lemma 4.2]{Hola2020}
If $X$ and $Y$ are spaces and $f \colon X \to Y$ is a function with property: for any $x \in X$, there exists an open subset $U$ of $X$ such that $x \in \overline{U}$ and $f(y) = f(x)$ for any $y \in U$. Then $f$ is quasi-continuous.
\end{lemma}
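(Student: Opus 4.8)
The plan is to verify the definition of quasi-continuity directly at an arbitrary point $x \in X$. So fix $x \in X$ together with open sets $W \subseteq X$ and $V \subseteq Y$ satisfying $x \in W$ and $f(x) \in V$; the goal is to produce a non-empty open $G \subseteq W$ with $f(G) \subseteq V$, which is exactly quasi-continuity of $f$ at $x$.

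First I would invoke the hypothesis to obtain an open set $U \subseteq X$ with $x \in \overline{U}$ on which $f$ is constantly equal to $f(x)$. Observe that $U$ is automatically non-empty, since otherwise $\overline{U} = \emptyset$ could not contain $x$. The natural candidate is then $G := U \cap W$, which is open as a finite intersection of open sets and is clearly contained in $W$; moreover $f(G) \subseteq f(U) = \{f(x)\} \subseteq V$ by the choice of $U$ and of $V$.

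The single point that genuinely needs an argument is that $G \neq \emptyset$, and this is precisely where the condition $x \in \overline{U}$ is used: since $W$ is an open neighbourhood of $x$ and a point lies in the closure of a set if and only if every one of its open neighbourhoods meets that set, we get $W \cap U \neq \emptyset$, i.e.\ $G \neq \emptyset$. Thus $G$ witnesses quasi-continuity of $f$ at $x$, and since $x$ was arbitrary, $f$ is quasi-continuous on $X$.

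I do not anticipate any real obstacle: the proof is essentially a one-line unwinding of the definitions, and the only subtlety---ensuring the witnessing open set is non-empty---is exactly what the closure hypothesis is tailored to supply. If one wished to be thorough one could additionally remark that the hypothesis yields quasi-continuity in the stronger form where the witnessing $G$ can always be chosen so that $f(G)$ is a single point.
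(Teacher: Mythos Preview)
Your argument is correct: the set $G = U \cap W$ is open, non-empty (by the closure condition), contained in $W$, and mapped into $V$, which is exactly what quasi-continuity at $x$ demands. Note that the paper does not give its own proof of this lemma but merely cites it from \cite[Lemma 4.2]{Hola2020}, so there is no in-paper argument to compare against; that said, your direct unwinding of the definition is the natural proof and almost certainly what the cited source does as well.
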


\begin{lemma}\label{l:Quasicontinuous function}\cite[Lemma 4.12]{Kumar2022}
If $X$ is a regular space and $Y$ is any space, then for any $x \in X$, non-empty closed set $E \subseteq X$ such that $x \notin E$ and $y_{1}, y_{2} \in Y$, then there exists $f \in Q_{P}(X,Y)$ such that  $f(x) = y_{1}$ and $f(E) = \{y_{2}\}$.
\end{lemma}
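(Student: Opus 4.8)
The plan is to build an explicit function $f\colon X\to Y$ with the prescribed values and then certify its quasi-continuity by appealing to Lemma~\ref{l:Condition for quasicont}, which reduces the task to producing, for each point, a single open set on which $f$ is locally constant and whose closure contains that point.

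First I would exploit the regularity of $X$. Since $E$ is closed and $x\notin E$, the complement $X\setminus E$ is an open neighbourhood of $x$, so regularity yields an open set $V$ with $x\in V\subseteq\overline{V}\subseteq X\setminus E$; note that $V\neq\emptyset$ (it contains $x$) and, as $E\neq\emptyset$, that $\overline{V}\neq X$ and $\overline{V}\cap E=\emptyset$. Now define $f$ by $f(z)=y_{1}$ for $z\in\overline{V}$ and $f(z)=y_{2}$ for $z\in X\setminus\overline{V}$. Then $f(x)=y_{1}$ because $x\in V\subseteq\overline{V}$, and $f(E)=\{y_{2}\}$ because $E\subseteq X\setminus\overline{V}$.

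It then remains to show $f\in Q(X,Y)$, which is the same as $f\in Q_{P}(X,Y)$ since $Q_{P}(X,Y)$ is just a topology on the set $Q(X,Y)$. I would apply Lemma~\ref{l:Condition for quasicont} pointwise: given $z\in X$, if $z\in\overline{V}$ take $U=V$, which is open and nonempty, satisfies $z\in\overline{V}=\overline{U}$, and on which $f\equiv y_{1}=f(z)$; if $z\notin\overline{V}$ take $U=X\setminus\overline{V}$, which is open, contains $z$ (hence $z\in\overline{U}$), and on which $f\equiv y_{2}=f(z)$. In either case the hypothesis of Lemma~\ref{l:Condition for quasicont} holds at $z$, so $f$ is quasi-continuous and the proof is complete.

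The only delicate point is the behaviour of $f$ on the boundary $\overline{V}\setminus V$: there $f=y_{1}$, yet every neighbourhood of such a point meets $X\setminus\overline{V}$ where $f=y_{2}$, so $f$ is in general discontinuous at those points. The device that makes the argument go through is to declare $f$ constant on the \emph{closed} set $\overline{V}$ rather than merely on the open set $V$; then $V$ itself is an admissible witnessing set in Lemma~\ref{l:Condition for quasicont} for every point of $\overline{V}$, boundary points included. I expect this choice, together with the regularity-supplied inclusion $\overline{V}\subseteq X\setminus E$ which simultaneously pins down $f(E)$, to be the heart of the matter; everything else is routine.
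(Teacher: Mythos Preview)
Your argument is correct and is the natural one: separate $x$ from $E$ by regularity, make $f$ constant on $\overline{V}$ and on its complement, and invoke Lemma~\ref{l:Condition for quasicont} with the witness $U=V$ for points of $\overline{V}$ and $U=X\setminus\overline{V}$ otherwise. Note, however, that the paper does not supply its own proof of this lemma; it is quoted from \cite[Lemma~4.12]{Kumar2022} and used as a black box, so there is nothing in the paper to compare against beyond confirming that your proof fits the stated hypotheses and the available Lemma~\ref{l:Condition for quasicont}.
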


\begin{lemma}\label{l:Evalution map}\cite[Lemma 5.5]{Kumar2022}
The evaluation map $e_{x}\colon Q_{P}(X,Y) \to Y$ at any $x \in X$, defined by $e_{x}(f) = f(x)$ is continuous.
\end{lemma}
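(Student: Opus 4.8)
The plan is to prove continuity via the standard subbase criterion: a map into a topological space is continuous as soon as the preimage of every member of a subbase of the codomain is open, and since $Y$ carries no structure beyond being a topological space, its entire topology may be taken as such a subbase. So first I would fix an arbitrary open set $V \subseteq Y$ and identify the set $e_{x}^{-1}(V)$. Unwinding the definition $e_{x}(f) = f(x)$, a function $f \in Q(X,Y)$ lies in $e_{x}^{-1}(V)$ exactly when $f(x) \in V$; that is, $e_{x}^{-1}(V) = \{f \in Q(X,Y) \colon f(x) \in V\}$.

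Second, I would match this set against the description of the topology of point-wise convergence on $Q(X,Y)$ recalled in the Preliminaries: the collection $\mathcal{S}' = \{[z,U] \colon z \in X,\ U \text{ open in } Y\}$ with $[z,U] = \{f \in Q(X,Y) \colon f(z) \in U\}$ is a subbase for that topology. Hence $e_{x}^{-1}(V) = [x,V] \in \mathcal{S}'$, so $e_{x}^{-1}(V)$ is open in $Q_{P}(X,Y)$. As $V$ was an arbitrary open subset of $Y$, this shows $e_{x}$ is continuous, completing the argument.

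I do not expect any genuine obstacle here: the statement is essentially a restatement of the definition of the topology of point-wise convergence, for which the evaluation maps are, by design, the maps whose continuity generates the topology. The only point worth a sentence of care is noting that it suffices to test preimages of subbasic (indeed arbitrary) open sets of $Y$ rather than every open set, and that one could equally well use the base $\mathcal{B}$ in place of $\mathcal{S}'$, since $[x,V]$ is also a basic set. An equivalent one-line phrasing would be to observe that $Q_{P}(X,Y)$ carries the initial topology with respect to the family $\{e_{z}\}_{z \in X}$ of evaluation maps into $Y$, and an initial topology renders every one of its defining maps continuous.
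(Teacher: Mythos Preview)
Your argument is correct and is the standard one: the preimage $e_{x}^{-1}(V)$ is exactly the subbasic open set $[x,V]$, so continuity is immediate. The paper does not supply its own proof of this lemma; it simply cites \cite[Lemma 5.5]{Kumar2022}, so there is nothing further to compare.
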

\section{Cardinal functions for the spaces $Q_{P}(X)$ and $Q_{P}(X,Y)$}\label{s:Cardinal functions for the spaces qpx and qpxy}
As we know for an arbitrary space $X$, $nw(X) \leq w(X)$ but for matrizable space $X$ network weight and weight coincide. In addition to the metrizable space $X$, the problem of the coincidence of network weight and weight can be solved in terms of the function space. However, according to Hola and Holy \cite[Example 5.1]{Hola2020}, $nw(C_{P}(\mathbb{R})) \neq w(C_{P}(\mathbb{R}))$, since $nw(C_{P}(\mathbb{R})) = \aleph_{0}$ and $w(C_{P}(\mathbb{R})) = \aleph_{1}$. In continuation, the result  \cite[Corollary 4.11]{Hola2020} tells us the coincidence of network weight and weight for the space $Q_{P}(X)$ for countable space $X$ but our next result is a coincidence of network weight and weight for the space $Q_{P}(X)$ in the more general form of the space $X$.

\begin{theorem}
For a ordered Hausdorff space $X$, $nw(Q_{P}(X)) = w(Q_{P}(X)) = |X|$.
\end{theorem}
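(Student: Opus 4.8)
The plan is to establish the chain $|X|\le nw(Q_{P}(X))\le w(Q_{P}(X))=|X|$. The last equality is already recorded in \cite{Hola2020} and the middle inequality is the interrelation $nw\le w$ from \eqref{eq:Interrelation between cardinal functions}, so the entire content lies in the lower bound $nw(Q_{P}(X))\ge|X|$. I would obtain it by attaching to each point $x\in X$ a quasi-continuous step function whose value at that single point already reveals $x$ to any network of $Q_{P}(X)$.

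For $x\in X$ let $L_x=\{y:y<x\}$ and $R_x=\{y:y>x\}$; in an ordered space these open rays are the key tool. If $x\in\overline{R_x}$ or $x$ is isolated, set $f_x=\chi_{[x,\to)}$ (which equals $\chi_{R_x\cup\{x\}}$); otherwise $x$ is neither isolated nor a right-limit, which in an ordered space forces $x\in\overline{L_x}$, and I put $f_x=\chi_{(\leftarrow,x]}=\chi_{L_x\cup\{x\}}$. In every case $f_x$ is constantly $0$ on $L_x$, constantly $1$ on $R_x$, and constant on a one-sided open set whose closure contains $x$ (or on $\{x\}$ when $x$ is isolated), so Lemma \ref{l:Condition for quasicont} applies and $f_x\in Q_{P}(X)$; moreover $f_x(x)=1$ in all cases. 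Write $X_R$ for the set of points covered by the first rule and $X_L=X\setminus X_R$, so that $X=X_R\cup X_L$.

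Let $\mathcal{N}$ be an arbitrary network of $Q_{P}(X)$. For each $x$ the set $[x;(0,2)]=\{g\in Q_{P}(X):0<g(x)<2\}$ is a basic neighbourhood of $f_x$, so choose $M_x\in\mathcal{N}$ with $f_x\in M_x\subseteq[x;(0,2)]$. If $x,x'\in X_R$, $x<x'$, and $M_x=M_{x'}$, then $f_{x'}\in M_{x'}=M_x\subseteq[x;(0,2)]$ gives $f_{x'}(x)\in(0,2)$, contradicting $f_{x'}(x)=\chi_{[x',\to)}(x)=0$; hence $x\mapsto M_x$ is injective on $X_R$, and the mirror computation (comparing the two functions at the larger of $x,x'$) makes it injective on $X_L$ as well. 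Consequently $|X_R|\le|\mathcal{N}|$ and $|X_L|\le|\mathcal{N}|$, so $|X|=|X_R\cup X_L|\le|\mathcal{N}|$; as $\mathcal{N}$ was arbitrary, $|X|\le nw(Q_{P}(X))$, which completes the proof.

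The delicate point, and essentially the only one, is the dichotomy behind the definition of $f_x$: one must verify that $\chi_{[x,\to)}$ is quasi-continuous precisely when $x$ is isolated or a right-limit — the obstruction sitting at the point $x$ itself, where Lemma \ref{l:Condition for quasicont} demands an open set on which the function is constantly $1$ and which has $x$ in its closure — and argue symmetrically on the left, keeping an eye on endpoints and on points with an immediate predecessor or successor. After that the ordered structure carries the whole argument: it is what makes the indicators quasi-continuous and, at the same time, what lets a single coordinate $x$ separate $f_x$ from every $f_{x'}$ on one side of it.
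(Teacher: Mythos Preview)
Your argument is correct and follows the paper's strategy: attach to each $x\in X$ a quasi-continuous indicator of an order ray, then inject $X$ into any network of $Q_{P}(X)$ via the basic neighbourhoods $[x;(0,2)]$ (the paper writes these as $W(f_r,\{r\},\epsilon)$). The paper uses a single family $f_r=\chi_{\overline{U_r}}$ with $U_r\supseteq(\leftarrow,r)$ and claims injectivity of $r\mapsto N_r$ directly, whereas your two-sided split $X=X_R\cup X_L$ is an added refinement that handles precisely the obstruction you flag---quasi-continuity of the ray indicator at the point $x$ itself when $x$ fails to be a one-sided limit---and then recovers $|X|\le|\mathcal{N}|$ from injectivity on each piece.
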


\begin{proof}
Let $X$ is ordered Hausdorff space. Then, for each $r\in X$, we choose an open set $U_r$ such that $x\in U_{r}$ for all $x<r$.  We define a  function $f_{r} \colon X \to \mathbb{R}$  as,
\[
f_{r}(x) =
\begin{cases}
1, & \text{if $x \in \overline{U_{r}} $};\\
0, & \text{otherwise}.
\end{cases}
\]
By Lemma \eqref{l:Condition for quasicont} is quasi-continuous.
Let $\mathcal{N}$ is network for the space $Q_{P}(X)$ such that $|\mathcal{N}| = nw(Q_{P}(X))$. Fix $1>\epsilon>0$, then  $\{W(f_{r}, \{r\}, \epsilon) \colon r \in X,\}$ is collection of open neighborhoods of $f_{r} \in Q_{P}(X)$. By the definition of network, for all $f_{r} \in Q_{P}(X)$ there exists $N_{r} \in \mathcal{N}$ such that $f_{r} \in N_{r} \subset W(f_{r}, \{r\}, \epsilon)$.
The map $g \colon X \to \mathcal{N}$ is defined as $g(r) = N_{r}$ for any $r\in X$. Now we claim $|X| \leq |\mathcal{N}|$. To prove our claim, we show that the map $g \colon X \to \mathcal{N}$ defined as $g(r) = N_{r}$ for any $r\in X$, is injective. Let us choose two distinct elements $r_{1}$ and $r_{2}$ of $X$. By the definition of the function $f_{r}$, $f_{r_{1}} \notin W(f_{r_{2}}, \{r_{2}\}, \epsilon)$ i.e. $N_{r_{1}} \nsubseteq  W(f_{r_{2}}, \{r_{2}\}, \epsilon)$. Since $N_{r_{2}} \subseteq  W(f_{r_{2}}, \{r_{2}\}, \epsilon)$ so $N_{r_{1}} \neq N_{r_{2}}$. Since $|\mathcal{N}| = nw(Q_{P}(X))$ and $|X| \leq |\mathcal{N}|$, therefore $|X| \leq nw(Q_{P}(X))$. By result \cite[Theorem 4.9]{Hola2020}, $w(Q_{P}(X)) = |X|$, therefore $w(Q_{P}(X)) \leq nw(Q_{P}(X))$. But by the result (2.13), $nw(Q_{P}(X)) \leq w(Q_{P}(X))$, therefore $nw(Q_{P}(X)) = w(Q_{P}(X))$.
\end{proof}

We are now going to investigate the cardinal function for the space $Q_{P}(X)$ in terms of the covering property. Let us define some important definitions and results that help in proving the next result. The Lindelöf space is defined by Alexandroff and Urysohn in 1929 as a space $X$ is called Lindelöf or has Lindelöf property if every open cover of $X$ is reducible to a countable subcover. The Lindelöf degree of a space $X$ is a cardinal number that provides a measure of the ``smallness" of the space in terms of its covering properties. Symbolically the Lindelöf degree of a space $X$ is defined by $L(X) = \aleph_{0} + \inf\{\eta \colon \text{any open cover } \mathcal{V} \, \text{of}\,  X \,\text{has a subcover}\, \mathcal{U} \subseteq \mathcal{V}\, \text{and} \, |\mathcal{U}| \leq \eta\}$. When investigating topological function spaces, it is observed that the tightness of such spaces is intricately linked to the Lindelöf degree of the underlying base space. This relation becomes clear in the ensuing result \cite[Theorem 4.7.1]{McCoy1988} $\alpha L(X) = t(C_{\alpha}X)$, where $\alpha$ is collection of subsets of $X$. Motivated by this result, we set out to explore the relation between the tightness of the space $Q_{P}(X)$ and the Lindelöf degree of the space $X$ in our next result.

\begin{theorem}\label{t:Lindelof degree and tightness of qpx}
For regular space $X$, then $L(X)\leq t(Q_{P}(X))$.
\end{theorem}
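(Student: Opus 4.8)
The plan is to set $\tau:=t(Q_{P}(X))$ (so $\tau\geq\aleph_{0}$) and to show that every open cover of $X$ has a subcover of cardinality at most $\tau$; this is exactly the statement $L(X)\leq\tau$. Fix an open cover $\mathcal{V}$ of $X$. Using regularity of $X$, for each $x\in X$ I would first choose $V_{x}\in\mathcal{V}$ with $x\in V_{x}$ and then an open set $U_{x}$ with $x\in U_{x}\subseteq\overline{U_{x}}\subseteq V_{x}$. For a finite set $A\subseteq X$ let $g_{A}\colon X\to\mathbb{R}$ be the indicator function of the closed set $K_{A}:=\bigcup_{x\in A}\overline{U_{x}}$. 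One checks that $g_{A}\in Q_{P}(X)$ by verifying the hypothesis of Lemma \ref{l:Condition for quasicont}: given $y\in X$, if $y\in U_{x}$ for some $x\in A$ take the open set $U_{x}$; if $y\in K_{A}$ but $y$ lies in no $U_{x'}$ with $x'\in A$, pick $x\in A$ with $y\in\overline{U_{x}}$ and again take $U_{x}$ (here $y\in\overline{U_{x}}$ and $g_{A}\equiv 1$ on $U_{x}$); and if $y\notin K_{A}$ take the open set $X\setminus K_{A}$. Note that $g_{A}(x)=1$ for every $x\in A$ (since $x\in U_{x}\subseteq K_{A}$) and $g_{A}\equiv 0$ on $X\setminus\bigcup_{x\in A}V_{x}$ (since $K_{A}\subseteq\bigcup_{x\in A}V_{x}$, which is the point at which regularity is genuinely used).

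Next, set $S:=\{g_{A}\colon A\in\mathcal{F}\}\subseteq Q_{P}(X)$ and let $\mathbf{1}\in Q_{P}(X)$ denote the constant function $1$. Then $\mathbf{1}\in\overline{S}$, because a basic neighbourhood of $\mathbf{1}$ has the form $W(\mathbf{1},F,\epsilon)$ with $F\in\mathcal{F}$, and $g_{F}(y)=1$ for every $y\in F$, so $g_{F}\in S\cap W(\mathbf{1},F,\epsilon)$. Since $t(\mathbf{1},Q_{P}(X))\leq\tau$, there is $\mathcal{A}\subseteq\mathcal{F}$ with $|\mathcal{A}|\leq\tau$ and $\mathbf{1}\in\overline{\{g_{A}\colon A\in\mathcal{A}\}}$. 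Put $\mathcal{W}:=\{V_{x}\colon x\in A\text{ for some }A\in\mathcal{A}\}\subseteq\mathcal{V}$; since each $A$ is finite and $\tau$ is infinite, $|\mathcal{W}|\leq\tau$. Finally, $\mathcal{W}$ covers $X$: if not, choose $z\in X\setminus\bigcup\mathcal{W}$; then $z\notin\bigcup_{x\in A}V_{x}$, hence $g_{A}(z)=0$ for every $A\in\mathcal{A}$, so the neighbourhood $W(\mathbf{1},\{z\},1/2)$ of $\mathbf{1}$ contains no member of $\{g_{A}\colon A\in\mathcal{A}\}$, contradicting $\mathbf{1}\in\overline{\{g_{A}\colon A\in\mathcal{A}\}}$. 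Thus $\mathcal{W}$ is a subcover of $\mathcal{V}$ with $|\mathcal{W}|\leq\tau=t(Q_{P}(X))$, and as $\mathcal{V}$ was arbitrary, $L(X)\leq t(Q_{P}(X))$.

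The cardinal arithmetic and the closure computations above are routine; the real content — and the step I would treat as the main obstacle — is identifying the correct family of test functions. One must encode a finite subfamily $\{V_{x}:x\in A\}$ of the cover by the indicator of the \emph{closed} set $\bigcup_{x\in A}\overline{U_{x}}$ rather than of the open set $\bigcup_{x\in A}U_{x}$, whose indicator fails quasi-continuity at boundary points. The regular-space shrinking $\overline{U_{x}}\subseteq V_{x}$ then does double duty: it is precisely what makes Lemma \ref{l:Condition for quasicont} applicable at those boundary points, and it simultaneously confines the support of $g_{A}$ to a finite union of members of $\mathcal{V}$, which is what allows the extracted family $\mathcal{W}$ to be both a subcover of $\mathcal{V}$ and of cardinality at most $\tau$.
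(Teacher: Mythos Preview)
Your proof is correct and follows essentially the same tightness strategy as the paper: manufacture $\{0,1\}$-valued quasi-continuous test functions indexed by finite $A\subseteq X$, show a constant function lies in their pointwise closure, use tightness to extract a subfamily of size at most $\tau$, and read off a subcover. The only differences are cosmetic: you build the test functions directly via Lemma~\ref{l:Condition for quasicont} as indicators of $\bigcup_{x\in A}\overline{U_{x}}$ and approximate the constant $1$, whereas the paper invokes Lemma~\ref{l:Quasicontinuous function} to obtain a function vanishing on a single $O_{A}\supseteq A$ and approximates the constant $0$; your finite-union construction has the minor advantage that \emph{every} finite $A$ yields a test function, so the closure claim needs no side condition, while the paper's version tacitly needs each finite set to fit inside one member of the cover (harmless after passing to finite unions, though this is not spelled out there).
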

\begin{proof}
Let $ t(Q_{P}(X))= \eta$ and $\mathcal{O}$ be any open cover of $X$. Let us choose a subcollection $\mathcal{F}' = \{A \in \mathcal{F} \colon A \subset O_{A}, O_{A} \in \mathcal{O}\}$  of $\mathcal{F}$. By the Lemma \eqref{l:Quasicontinuous function}, for each $A \in \mathcal{F}'$ there exists $f_{A} \in Q_{P}(X)$ such that $f_{A}(O_{A}) = \{0\}$ and $f_{A}(X \backslash O_{A}) = \{1\}$. Let us construct a subset $P = \{f_{A} \colon A \in \mathcal{F}'\}$ of $Q_{P}(X)$. By the definition of $f_{A}$, it is clear that zero function $f_{0} \in \overline{P}$. By the definition of tightness of the space $Q_{P}(X)$, there exists a subset $P'$ of $P$ such that $|P'| \leq \eta$ and $f_{0} \in \overline{P'}$. Let us choose a subcollection $\mathcal{O}' = \{O_{A} \colon f_{A} \in P'\}$ of $\mathcal{O}$, where $|\mathcal{O}'| \leq \eta$. Now we claim $\mathcal{O}'$ is cover of $X$. For this, we show that for all $x \in X$, there exists $O_{A} \in \mathcal{O}'$ such that $x \in O_{A}$. Let us consider an open neighborhood $[x, (-1, 1)] = \{g \in Q_{P}(X) \colon g(x) \in (-1, 1)\}$ of the zero function $f_{0}$. Since $f_{0} \in \overline{P'}$ so there exists some $f_{A} \in P'$ such that $f_{A} \in P' \cap [x, (-1, 1)]$. Thus $f_{A}(x) < 1$, but by definition of $f_{A}$, $f_{A}(x) = 0$. Therefore, $x \in O_{A}$. Finally by definition of Lindelof degree, $L(X)\leq t(Q_{P}(X))$.
\end{proof}
\begin{corollary}
For regular space $X$, if $t(Q_{P}(X)) = \aleph_{0}$, then $X$ is Lindelof space.
\end{corollary}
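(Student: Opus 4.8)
The plan is to derive this as an immediate consequence of Theorem \ref{t:Lindelof degree and tightness of qpx}. Since $X$ is assumed regular, that theorem applies and gives $L(X) \le t(Q_{P}(X))$. Feeding in the hypothesis $t(Q_{P}(X)) = \aleph_{0}$ yields $L(X) \le \aleph_{0}$ at once.

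Next I would invoke the definition of the Lindel\"of degree itself: because $L(X) = \aleph_{0} + \inf\{\eta \colon \ldots\}$, one always has $L(X) \ge \aleph_{0}$ for any space $X$. Combining this with the bound just obtained forces $L(X) = \aleph_{0}$.

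Finally, I would unwind what $L(X) = \aleph_{0}$ means: every open cover $\mathcal{V}$ of $X$ admits a subcover $\mathcal{U} \subseteq \mathcal{V}$ with $|\mathcal{U}| \le \aleph_{0}$, i.e.\ a countable subcover. By the definition of the Lindel\"of property recalled just before Theorem \ref{t:Lindelof degree and tightness of qpx}, this is precisely the statement that $X$ is a Lindel\"of space, completing the argument.

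I do not anticipate any genuine obstacle here; the corollary is purely a specialization of the preceding theorem together with the trivial lower bound $L(X) \ge \aleph_{0}$, so the only thing to be careful about is citing Theorem \ref{t:Lindelof degree and tightness of qpx} with its regularity hypothesis satisfied and phrasing the conclusion in terms of the Lindel\"of property rather than the numerical invariant.
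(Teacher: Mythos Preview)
Your proposal is correct and matches the paper's treatment: the paper states this corollary immediately after Theorem~\ref{t:Lindelof degree and tightness of qpx} with no separate proof, so it is intended as exactly the specialization you describe. Your write-up simply spells out the trivial deduction that the paper leaves implicit.
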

According to the Example \cite[Example 5.1]{Hola2020}, $\psi(Q_{P}(\mathbb{R})) = \aleph_{1}$. But we know that $wc(\mathbb{R}) = \aleph_{0}$ i.e.  $\psi(Q_{P}(\mathbb{R}))\neq wc(\mathbb{R})$. An emerging question is how the pseudocharacter and weak covering numbers are generally interconnected. Our next result tells us that under the condition of regularity of space $X$ and for any metric space $Y$, $wc(X)\leq \psi(Q_{P}(X,Y))$.

\begin{theorem}\label{t:weak covering pseudo character}
For regular space $X$ and $Y$ be metric space, then $wc(X)\leq \psi(Q_{P}(X,Y))$.
\end{theorem}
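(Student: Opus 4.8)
The plan is to mimic, for the cardinal function $wc$, the argument underlying $d(X)\le\psi(Q_{P}(X,Y))$ in \eqref{eq:Cardinal invariants of spaces qpxy 4}: from a pseudocharacter witness at a suitable constant function I will read off a dense subset of $X$ of cardinality at most $\psi(Q_{P}(X,Y))$, and then convert it into a weak covering of the same size. First I would fix points $y_{0}\ne y_{1}$ of $Y$ (available since a non-trivial metric space has at least two points; if $|Y|\le 1$ the statement is immediate) and let $f_{0}\equiv y_{0}$, which is continuous, hence quasi-continuous. Since $\psi(f_{0},Q_{P}(X,Y))\le\psi(Q_{P}(X,Y))$, I would take a family $\{G_{\alpha}:\alpha\in I\}$ of open subsets of $Q_{P}(X,Y)$ with $\bigcap_{\alpha\in I}G_{\alpha}=\{f_{0}\}$ and $|I|\le\psi(Q_{P}(X,Y))$. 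Because $Y$ is metric, the neighbourhoods $W(f_{0},A,\epsilon)$ form a local base at $f_{0}$, so each $G_{\alpha}$ may be shrunk to some $W(f_{0},A_{\alpha},\epsilon_{\alpha})\subseteq G_{\alpha}$ with $A_{\alpha}\in\mathcal{F}$ and $\epsilon_{\alpha}>0$; then $\bigcap_{\alpha\in I}W(f_{0},A_{\alpha},\epsilon_{\alpha})=\{f_{0}\}$ as well.

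The central claim is that $D:=\bigcup_{\alpha\in I}A_{\alpha}$ is dense in $X$. (Note $D\ne\emptyset$, since otherwise each $W(f_{0},A_{\alpha},\epsilon_{\alpha})$ equals $Q(X,Y)$ and the intersection cannot be $\{f_{0}\}$.) To prove density I would argue by contradiction: if $\overline{D}\ne X$, pick $x\in X\setminus\overline{D}$; then $\overline{D}$ is a non-empty closed set missing $x$, so Lemma \eqref{l:Quasicontinuous function}, applied with value $y_{1}$ at $x$ and value $y_{0}$ on $\overline{D}$, supplies $g\in Q_{P}(X,Y)$ with $g(x)=y_{1}$ and $g(\overline{D})=\{y_{0}\}$. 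Since $g$ equals $f_{0}$ on each $A_{\alpha}\subseteq D$, one gets $g\in W(f_{0},A_{\alpha},\epsilon_{\alpha})$ for every $\alpha$, hence $g\in\bigcap_{\alpha\in I}W(f_{0},A_{\alpha},\epsilon_{\alpha})=\{f_{0}\}$ --- contradicting $g(x)=y_{1}\ne y_{0}=f_{0}(x)$. So $\overline{D}=X$, and since the $A_{\alpha}$ are finite, $|D|\le|I|\cdot\aleph_{0}\le\psi(Q_{P}(X,Y))$.

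Finally I would manufacture a weak covering of $X$ from $D$: choosing, for each $z\in D$, an open set $V_{z}\ni z$ and setting $\mathcal{J}=\{V_{z}:z\in D\}$, one has $D\subseteq\bigcup\mathcal{J}$, whence $\overline{\bigcup\mathcal{J}}\supseteq\overline{D}=X$, so $\mathcal{J}$ is a weak covering; therefore $wc(X)\le|\mathcal{J}|\le|D|\le\psi(Q_{P}(X,Y))$, as required.

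I expect the real difficulty to lie in the construction of the ``wiggling'' function $g$ in the second step --- a quasi-continuous function that is \emph{identically} $y_{0}$ on the whole, possibly very scattered, set $D$ yet differs from $f_{0}$ at a prescribed point off $\overline{D}$. Lemma \eqref{l:Quasicontinuous function} is precisely the tool that produces such a separating quasi-continuous function, and this is exactly where the regularity of $X$ enters; without it one cannot expect this route to succeed. The remaining matters --- the non-triviality of $Y$, the degenerate case $D=\emptyset$, and the identity $|I|\cdot\aleph_{0}=|I|$ for infinite index sets --- are routine bookkeeping.
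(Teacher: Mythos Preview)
Your proof is correct and follows essentially the same route as the paper's: fix a constant function, take a pseudocharacter witness at it consisting of basic neighbourhoods $W(f_{0},A_{\alpha},\epsilon_{\alpha})$, and use Lemma~\ref{l:Quasicontinuous function} (regularity of $X$) to show by contradiction that the union of the finite sets $A_{\alpha}$ is dense, which immediately yields a weak covering of the required size. The only cosmetic difference is that the paper passes directly to the open neighbourhoods $U_{x}$ of the points of the $A_{\alpha}$ and shows that collection is a weak covering, whereas you first isolate the dense set $D=\bigcup_{\alpha}A_{\alpha}$ and then convert it into a weak covering; your handling of the edge cases ($|Y|\le 1$, $D=\emptyset$) is in fact more careful than the paper's.
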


\begin{proof}
Let $h_{0} \in Q_{P}(X,Y)$ such that $h_{0}(x) = b$, for all $x \in X$, where $b$ is fixed in $Y$. Let $\psi (h_{0}, Q_{P}(X,Y)) = |\mathcal{V}|$, where $\mathcal{V} = \{W(h_{0}, A_{i}, \epsilon)\colon i \in J\}$ is collection of open neighborhoods of $h_{0} \in Q_{P}(X,Y)$, such that $\bigcap \mathcal{V} = \{h_{0}\}$, $A_{i} \in \mathcal{F}$ and $J$ is arbitrary index set.
Let us construct a collection $\mathcal{B}_{A_{i}} = \{U_{x} \colon x \in A_{i}\}$ of open sets of $X$ such that for each $x \in A_{i}$, we can pick an open set $U_{x}$ that contain $x$. Let $\mathcal{J} = \bigcup\{\mathcal{B}_{A_{i}}\colon i \in J\}$, where $|\mathcal{J}| \leq |J|$.
We claim that $\mathcal{J}$ is the weak covering of $X$. Let $x \in X$ and $x \notin \overline{\bigcup \mathcal{J}}$. Then by Lemma \eqref{l:Quasicontinuous function} there exists a map $h \in Q_{P}(X,Y)$ such that $h(x) = a$ and $h(\overline{\bigcup \mathcal{J}})=\{b\}$ where $a \neq b$. But each $W(h_{0},A_{i},\epsilon)$ contains $h$, which is a contradiction. Thus, $\overline{\bigcup \mathcal{J}} = X$. Since $\mathcal{J}$ is a weak cover of $X$, therefore, $wc(X) \leq \psi(Q_{P}(X,Y))$.
\end{proof}

By the Problem \cite[Problem 175]{Tkachuk2010}, the separability of the Tychnoff space $X$ is related to the pseudocharacter of some compact subspace $G$ of $C_{P}(X)$ that is $X$ is separable if $\psi(G, Q_{P}(X,Y))\leq \aleph_{0}$. Now, in the next result, we are going to find the condition for the separability of regular space $X$ with the help of the pseudocharacter of some compact subspace of the $Q_{P}(X,Y)$, where $Y$ is ordered topological space.

\begin{theorem}
Let $X$ is  regular space, $Y$ ordered set with ordered topology, if $\psi(G, Q_{P}(X,Y)) \leq \aleph_{0}$ for some compact  $G\subset Q_{P}(X,Y)$. Then $X$ is separable.
\end{theorem}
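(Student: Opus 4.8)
The plan is to follow the classical pattern behind Tkachuk's Problem 175 (separability of $X$ being detected by the pseudocharacter of a compact subset of $C_P(X)$): from the compact set $G$ together with $\psi(G,Q_P(X,Y))\le\aleph_0$ I will distil a countable set $D\subseteq X$ and then prove that $\overline D=X$.

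First I would fix a countable family $\{O_n : n\in\mathbb{N}\}$ of open subsets of $Q_P(X,Y)$ whose intersection is $G$. For each $n$, the compact set $G$ is contained in the open set $O_n$, so I cover $G$ by basic open sets of $\mathcal B$ lying inside $O_n$, extract a finite subcover by compactness, and let $D_n\subseteq X$ be the finite union of the coordinates on which those finitely many basic sets depend; put $D=\bigcup_n D_n$, a countable set. The property of $D$ that drives everything is: if $g_0\in G$ is fixed and $f\in Q_P(X,Y)$ satisfies $f|_D=g_0|_D$, then $f\in G$. Indeed, for each $n$ the function $g_0$ belongs to one of the chosen basic sets $[x_1,\dots,x_k;V_1,\dots,V_k]\subseteq O_n$ with $\{x_1,\dots,x_k\}\subseteq D_n\subseteq D$; as $f$ agrees with $g_0$ at $x_1,\dots,x_k$, also $f\in[x_1,\dots,x_k;V_1,\dots,V_k]\subseteq O_n$, and $n$ being arbitrary gives $f\in\bigcap_n O_n=G$.

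Then I would show $D$ is dense by contradiction. Suppose $x_0\in X\setminus\overline D$; by regularity pick an open $W_0$ with $x_0\in W_0\subseteq\overline{W_0}\subseteq X\setminus\overline D$, so $\overline{W_0}\cap D=\emptyset$. Fix $g_0\in G$. Since $e_{x_0}$ is continuous (Lemma \ref{l:Evalution map}), $e_{x_0}(G)=\{h(x_0):h\in G\}$ is a compact subset of $Y$, and I pick $c\in Y\setminus e_{x_0}(G)$. Define $f\colon X\to Y$ by $f\equiv c$ on $\overline{W_0}$ and $f=g_0$ on $X\setminus\overline{W_0}$. Then $f\in Q_P(X,Y)$: at any point $z\in\overline{W_0}$ and any open $V\ni f(z)=c$, the nonempty open set $W_0\cap U$ (for $U$ a neighbourhood of $z$) is sent by $f$ into $\{c\}\subseteq V$, while at points of the open set $X\setminus\overline{W_0}$, where $f$ equals $g_0$, quasi-continuity of $f$ is inherited from that of $g_0$ --- this is the same kind of gluing legitimised by Lemma \ref{l:Condition for quasicont}. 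Because $\overline{W_0}\cap D=\emptyset$ we have $f|_D=g_0|_D$, so the previous paragraph yields $f\in G$; but $f(x_0)=c\notin e_{x_0}(G)$ forces $f\notin G$, a contradiction. Hence $\overline D=X$, and $X$ is separable.

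I expect the decisive point, where the compactness of $G$ and the order topology on $Y$ are genuinely used, to be the last one: the existence of a value $c\in Y\setminus e_{x_0}(G)$. Compactness of $G$ makes $e_{x_0}(G)$ a \emph{compact} (hence, the order topology on a nontrivial $Y$ being Hausdorff, closed and --- in the cases that matter --- proper) subset of $Y$, so such a $c$ is available; to avoid even this mild non-degeneracy, in the pathological case $e_{x_0}(G)=Y$ one would perturb $f$ simultaneously at two or more points of the open set $X\setminus\overline D$ and invoke compactness of the image of $G$ in the corresponding finite power of $Y$. A second, more routine, issue to watch is the quasi-continuity of the glued function $f$, handled above via Lemma \ref{l:Condition for quasicont} near $x_0$ and via quasi-continuity of $g_0$ away from $\overline{W_0}$.
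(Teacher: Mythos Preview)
Your proof follows essentially the same route as the paper's: extract a countable set $D\subseteq X$ of coordinates by covering $G$ inside each $O_n$ with finitely many basic open sets, then contradict $x_0\notin\overline D$ by producing a quasi-continuous $f$ that agrees with some $g_0\in G$ on $D$ yet takes at $x_0$ a value outside $e_{x_0}(G)$. Your version is actually more careful than the paper's --- you do not assume a constant function lies in $G$, you explicitly verify quasi-continuity of the glued $f$, and you flag the non-degeneracy issue $e_{x_0}(G)\ne Y$ that the paper handles only informally via an order-boundedness remark.
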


\begin{proof}
Let $g \in G$ such that $g(x) = b$, for all $x \in X$, where $b \in Y$ is fixed.  Let open set $U = [x_{1}, \dots, x_{n};V_1,\dots, V_{n}]$ in $Q_{P}(X,Y)$, where $x_{i} \in X $ and $V_{i}$ open in $Y$. Construct a set $K(U) = \{x_{1}, \dots, x_{n}\}$.
Since $\psi(G, Q_{P}(X,Y)) \leq \aleph_{0}$, then by definition of pseudocharacter there exists a collection $\mathcal{V} = \{O_{n} \colon n \in \mathbb{N}\}$ of open subsets of $Q_{P}(X,Y)$  such that $\bigcap \mathcal{V} = G$. So, for fixed $n \in \mathbb{N}$ and for each $g \in G$ we have an open set $U_{g}$ such that $g \in U_{g}\subset O_{n}$. Since $G$ is compact so every open cover $\{U_{g}\colon g \in G\}$ of $G$ has finite subcover say $\{U_{g_{1}}, \dots, U_{g_{m}}\}$. Construct two sets $P_{n} = \bigcup_{i = 1}^{m} U_{g_{i}}$ such that $G \subset P_{n} \subset O_{n}$ and $D_{n} = \bigcup_{i = 1}^{m}K(U_{g_{i}})$. It is easy to see that $D = \cup \{D_{n} \colon n \in \mathbb{N}\}$ is countable. Now we claim $\overline{D} = X$. Let us assume that $\overline{D} \neq X$, then $x \in X \setminus \overline{D}$. By the Lemma \eqref{l:Evalution map} the map $e_{x} \colon Q_{P}(X,Y) \to Y$ defined by $e_{x}(f) = f(x)$ is a continuous, therefore the set $e_{x}(G)$ is bounded in $Y$. So there exists $b' > b$ such that $|f(x)| < b'$ for all $f \in G$ and from Lemma \eqref{l:Quasicontinuous function} we have a  map $h \in Q_{P}(X,Y)$ with $h(x) = b'$ and $h(D)\subset \{b\}$. This implies $h(a) = b'$, so $h \notin G$. However $h|D = g|D$ implies $h \in \bigcap \mathcal{V}$, a contradiction.
\end{proof}

By the result \cite[Theorem 4.12]{Hola2020} that for a regular space $X$, network weight of $X$ follows the network weight of $Q_{P}(X)$ and in general network weight of any space $X$ dominates the pseudocharacter of the space $X$ by Problem \cite[Problem 156(iii)]{Tkachuk2010}. Now, in the next result we proved that the pseudocharacter of $Q_{P}(X,Y)$ lies between the network weight of $X$ and the network weight of $Q_{P}(X,Y)$.

\begin{theorem}\label{t:network to pesudo}
If $X$ is a regular space and $Y$ is any space, then $nw(X) \leq \psi(Q_{P}(X,Y))$.
\end{theorem}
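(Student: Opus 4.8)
The plan is to mimic the pattern of the three preceding proofs: move to a constant function, read a family of finite ``test sets'' off its pseudocharacter, and then manufacture a network of $X$ from those sets using the regularity of $X$ and Lemma \eqref{l:Quasicontinuous function}. Fix distinct $b,b'\in Y$ (the inequality has content only when $Y$ is non-trivial) and let $h_{0}\equiv b$, which is quasi-continuous by Lemma \eqref{l:Condition for quasicont}. Put $\kappa=\psi(Q_{P}(X,Y))$; since $\psi(h_{0},Q_{P}(X,Y))\le\kappa$, pick a family $\gamma$ of open neighbourhoods of $h_{0}$ with $\bigcap\gamma=\{h_{0}\}$ and $|\gamma|\le\kappa$. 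Replacing each member of $\gamma$ by a basic neighbourhood of $h_{0}$ contained in it, I may assume $\gamma=\{[x^{i}_{1},\dots,x^{i}_{n_{i}};U^{i}_{1},\dots,U^{i}_{n_{i}}]:i\in J\}$ with $|J|\le\kappa$, each $A_{i}:=\{x^{i}_{1},\dots,x^{i}_{n_{i}}\}\in\mathcal{F}$, each $U^{i}_{k}$ an open neighbourhood of $b$, and still $\bigcap\gamma=\{h_{0}\}$. Set $D=\bigcup_{i\in J}A_{i}$, so $|D|\le\kappa\cdot\aleph_{0}=\kappa$.

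The first and easy step is $\overline{D}=X$: if some $p\notin\overline{D}$, then since $\overline{D}$ is a non-empty closed set missing $p$, Lemma \eqref{l:Quasicontinuous function} gives $f\in Q_{P}(X,Y)$ with $f(p)=b'$ and $f(\overline{D})=\{b\}$; as $b\in U^{i}_{k}$ for all $i,k$, we get $f\in[x^{i}_{1},\dots,x^{i}_{n_{i}};U^{i}_{1},\dots,U^{i}_{n_{i}}]$ for every $i$, hence $f\in\bigcap\gamma=\{h_{0}\}$, contradicting $f(p)=b'\ne b$. This already yields $d(X)\le\kappa$, which is the bound $d(X)\le\psi(Q_{P}(X,Y))$ recalled in \eqref{eq:Cardinal invariants of spaces qpxy 4}. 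The substantive step is to boost ``dense set'' to ``network''. Using regularity, for each $i\in J$ and each $x\in A_{i}$ attach a family $\mathcal{O}_{x}$ of open sets witnessing regularity at $x$ (for every closed $E$ with $x\notin E$ some $O\in\mathcal{O}_{x}$ satisfies $x\in O\subseteq\overline{O}\subseteq X\setminus E$); let $\mathcal{N}$ be the collection of all finite intersections of closures of members of $\bigcup_{i,x}\mathcal{O}_{x}$, so $|\mathcal{N}|\le\kappa$. To verify that $\mathcal{N}$ is a network one would take $p\in W$ with $W$ open and $W\ne X$, suppose that no member of $\mathcal{N}$ lies between $p$ and $W$, choose open $V$ with $p\in V\subseteq\overline{V}\subseteq W$, use Lemma \eqref{l:Quasicontinuous function} to build $g\in Q_{P}(X,Y)$ with $g(p)=b'$ and $g(X\setminus V)=\{b\}$, and try to conclude from the failure of $\mathcal{N}$ that $g$ still lies in every member of $\gamma$, contradicting $\bigcap\gamma=\{h_{0}\}$. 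Finally, the trivial inequality $\psi(Q_{P}(X,Y))\le nw(Q_{P}(X,Y))$ (cf.\ \cite[Problem 156(iii)]{Tkachuk2010}) records that $\psi(Q_{P}(X,Y))$ is sandwiched between $nw(X)$ and $nw(Q_{P}(X,Y))$.

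The main obstacle is precisely that last implication. The pseudocharacter of a constant map only transparently encodes which open sets meet $\bigcup_{i}A_{i}$, i.e.\ density, because a quasi-continuous $g$ that differs from $h_{0}$ near a non-isolated point $p$ is forced to differ from $b$ on a whole non-empty open piece of every neighbourhood of $p$, not merely at $p$, so the witnessing test point $x^{i}_{k}$ may be any point of that open piece rather than one whose value is controlled. Turning the finite sets $A_{i}$ into an honest network therefore needs more than regularity applied naively: one must exploit the full information carried by a \emph{minimal} such family $\gamma$, or replace $h_{0}$ by a more rigid quasi-continuous function whose pseudocharacter genuinely detects the finer separation of points from closed sets, all while keeping the cardinality bounded by $\kappa$. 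That is the delicate point on which the whole argument hinges.
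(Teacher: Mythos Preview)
Your proposal is, by your own admission, incomplete: you correctly set up the pseudobase $\gamma$ at the constant map $h_{0}$, extract the finite support sets $A_{i}$, and prove cleanly that $D=\bigcup_{i}A_{i}$ is dense (reproducing \eqref{eq:Cardinal invariants of spaces qpxy 4}); but you do not finish the step from density to a network, and your final paragraph names the obstruction honestly rather than resolving it. The paper's proof diverges from yours exactly at this point. Instead of invoking Lemma~\eqref{l:Quasicontinuous function} to produce a function that is $b'$ at a single point $p$ and $b$ on a closed set, the paper fixes an open $V\ni x$, uses Lemma~\eqref{l:Condition for quasicont} to declare the two-valued function $f=z_{1}$ on $V$, $f=z_{0}$ on $X\setminus V$ quasi-continuous, observes $f\neq g$, and then picks $U\in\mathcal{V}$ with $f\notin U$; since $f$ agrees with $g$ precisely off $V$, the offending coordinate of $U$ must lie in $V$. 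The paper then takes $\mathcal{N}$ to be the (finite sets or singletons of) support points of the $U\in\mathcal{V}$ and asserts this is a network for $X$.

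Two remarks are in order, and they show that the obstacle you isolated is genuine. First, the hypothesis of Lemma~\eqref{l:Condition for quasicont} can fail for the paper's $f$ at points of $X\setminus V$ when $V$ is open dense (there is then no nonempty open set on which $f\equiv z_{0}$), so the quasi-continuity of the test function is not secured in general. Second, and more to the point you raised, the displayed argument only yields that \emph{some} coordinate of \emph{some} $K(U)$ lies inside $V$; it does not produce an $N\in\mathcal{N}$ with $x\in N\subset V$, which is what the network condition requires. In other words, the paper's proof, read literally, establishes exactly the density statement you already proved, not the stronger network bound. So your diagnosis of the difficulty is accurate, and the device you were missing (the indicator-type test function in place of Lemma~\eqref{l:Quasicontinuous function}) does not by itself close the gap.
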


\begin{proof}
Let $g(x) = z_{0}$ for all $x \in X$ and $\psi(g, Q_{P}(X,Y)) = |\mathcal{V}|$, where $\mathcal{V}$ is collection of open subsets of $Q_{P}(X,Y)$ such that  $\cap \mathcal{V} =\{g_{0}\}$ and $\mathcal{V}$ contains element of the form $U = [x_{1}, \dots, x_{n}; V_{1}, \dots, V_{n}]$. Now construct a set $K(U) = \{x_{1}, \dots, x_{n}\}$ and take $\mathcal{N} = \cup\{K(U)\colon U \in \mathcal{V} \}$, clearly $|\mathcal{N}| \leq |\mathcal{V}|$. Since  $\psi(g, Q_{P}(X,Y)) \leq \psi(Q_{P}(X,Y))$, so $|\mathcal{V}| \leq \psi(Q_{P}(X,Y))$. Now we claim that $\mathcal{N}$ is a network of $X$. Let us consider $V$ be any open subset of $X$ containing $x$. By Lemma \eqref{l:Condition for quasicont} define a quasi-continuous function as
 \[
f(x) =
\begin{cases}
z_{1}  & \text{if   $x \in V $};\\
z_{0} & \text{otherwise}.
\end{cases}
\]
There exists a $U \in \mathcal{V}$ such that $f \notin U$. Then there exists a $N \in \mathcal{N}$ such that $z_{0} \notin f(N)$. Therefore for each  $y \in N$ the $f(y)\neq z_{0}$. This implies  $y \notin V^{C}$, thus $y \in N \subset V$. Therefore $\mathcal{N}$ is network for $X$. Hence $nw(X)\leq \psi(Q_{P}(X,Y))$.
\end{proof}

\begin{corollary}\label{c:network relation}
For a regular space $X$ and  $Y$ be space, then $nw(X) \leq nw(Q_{P}(X,Y))$.
\end{corollary}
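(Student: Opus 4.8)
The plan is to obtain this corollary as a one-line consequence of Theorem~\ref{t:network to pesudo}, using only the standard comparison $\psi(Z)\le nw(Z)$ that was already recalled above via \cite[Problem 156(iii)]{Tkachuk2010}. Indeed, Theorem~\ref{t:network to pesudo} gives $nw(X)\le \psi(Q_{P}(X,Y))$ under exactly the present hypotheses, so it is enough to observe that $\psi(Q_{P}(X,Y))\le nw(Q_{P}(X,Y))$ and then chain the two inequalities.

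For completeness I would include the short reason why $\psi(Z)\le nw(Z)$ (for $Z$ in the relevant separation class). Let $\mathcal{N}$ be a network of $Z$ with $|\mathcal{N}|=nw(Z)$ and fix $z\in Z$. For each $w\neq z$ the set $Z\setminus\{w\}$ is open and contains $z$, so there is $N\in\mathcal{N}$ with $z\in N\subseteq Z\setminus\{w\}$; hence $\bigcap\{N\in\mathcal{N}\colon z\in N\}=\{z\}$. When $Z$ is regular one may replace each $N\in\mathcal{N}$ by $\overline{N}$ --- this is again a network --- and then the open family $\{Z\setminus\overline{N}\colon N\in\mathcal{N},\ z\notin\overline{N}\}$ has intersection $\{z\}$ as well, so $\psi(z,Z)\le|\mathcal{N}|=nw(Z)$; taking the supremum over $z$ gives $\psi(Z)\le nw(Z)$. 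Applying this with $Z=Q_{P}(X,Y)$ and combining with Theorem~\ref{t:network to pesudo} gives
\[
nw(X)\ \le\ \psi(Q_{P}(X,Y))\ \le\ nw(Q_{P}(X,Y)),
\]
which is the assertion.

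The only point that needs care --- and the one I would spell out --- is the separation axiom hidden in the inequality $\psi\le nw$: the argument above uses that singletons are closed (to make $Z\setminus\{w\}$ open) and, for the passage to a closed network, regularity of $Z$. Both properties are inherited by $Q_{P}(X,Y)$ from $Y$, since $Q_{P}(X,Y)$ is a subspace of the product $Y^{X}$; so in the relevant setting (taking $Y$ to be, say, a non-trivial $T_{1}$ or regular space, consistently with the conventions used for the earlier results) there is no difficulty. I expect no genuine obstacle here: the entire substance of the statement is already contained in Theorem~\ref{t:network to pesudo}, and the corollary is essentially its combination with a textbook inequality. If one prefers to avoid invoking that inequality externally, the alternative is simply to inline its proof for $Z=Q_{P}(X,Y)$, i.e.\ convert a minimal network of $Q_{P}(X,Y)$ into a pseudocharacter family at a constant function and then feed it into the argument of Theorem~\ref{t:network to pesudo} unchanged.
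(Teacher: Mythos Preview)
Your approach is exactly the paper's: apply Theorem~\ref{t:network to pesudo} to get $nw(X)\le\psi(Q_{P}(X,Y))$ and then invoke the general inequality $\psi(Z)\le nw(Z)$ with $Z=Q_{P}(X,Y)$. Your added remark on the $T_{1}$/regularity hypothesis needed for $\psi\le nw$ is a legitimate caveat that the paper's one-line proof glosses over, but otherwise the arguments coincide.
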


\begin{proof}
For a space $X$ the $\psi(X) \leq nw(X)$. Then by Theorem \eqref{t:network to pesudo}, we get $nw(X) \leq nw(Q_{P}(X,Y))$.
\end{proof}

\begin{corollary}
Let $X$ be a regular space and $Y$ be metric space then $d(X) \leq nw(Q_{P}(X,Y))$. \cite[Theorem 4.13]{Kumar2022}
\end{corollary}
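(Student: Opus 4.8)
The plan is to deduce this inequality by chaining two bounds that are already available. First I would use the general interrelation \eqref{eq:Interrelation between cardinal functions}, which gives $d(X) \le nw(X)$ for every topological space $X$, in particular for the regular space at hand. (A self-contained justification, should one want it: fix a network $\mathcal{N}$ of $X$ with $|\mathcal{N}| = nw(X)$, delete its empty members, pick one point from each of the remaining members, and observe that the resulting set meets every nonempty open subset of $X$, hence is dense; thus $d(X) \le |\mathcal{N}| = nw(X)$.)

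Second, I would invoke Corollary \ref{c:network relation}, which says that $nw(X) \le nw(Q_{P}(X,Y))$ whenever $X$ is regular (and $Y$ is arbitrary). Putting the two steps together gives $d(X) \le nw(X) \le nw(Q_{P}(X,Y))$, which is exactly the claim. An entirely parallel route runs through the pseudocharacter: Theorem \ref{t:network to pesudo} provides $nw(X) \le \psi(Q_{P}(X,Y))$, and the standard estimate $\psi(Z) \le nw(Z)$ gives $\psi(Q_{P}(X,Y)) \le nw(Q_{P}(X,Y))$, so again $d(X) \le nw(X) \le \psi(Q_{P}(X,Y)) \le nw(Q_{P}(X,Y))$. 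Either way the metrizability of $Y$ is not genuinely used and is kept only to align with \cite[Theorem 4.13]{Kumar2022}.

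There is really no obstacle here: the statement is a formal corollary of material proved earlier in this section. The only point requiring a modicum of care is to make sure the regularity hypothesis on $X$ is the one feeding Corollary \ref{c:network relation} (equivalently Theorem \ref{t:network to pesudo}), since that is the single place it is actually needed. If a direct argument in the style of Theorems \ref{t:Lindelof degree and tightness of qpx}--\ref{t:network to pesudo} were preferred, one could instead take a network $\mathcal{N}$ of $Q_{P}(X,Y)$ of minimal cardinality, attach to each basic neighbourhood $W(h_{0},A,\epsilon)$ of a constant map a member of $\mathcal{N}$ it contains, and use Lemma \ref{l:Quasicontinuous function} together with regularity to see that the union of the corresponding finite sets $A$ is dense in $X$; but this merely re-derives Corollary \ref{c:network relation} along the way, so the concatenation above is the cleaner choice.
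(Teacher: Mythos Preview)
Your proposal is correct and follows essentially the same route as the paper: combine the general inequality $d(X)\le nw(X)$ with Theorem~\ref{t:network to pesudo} (equivalently Corollary~\ref{c:network relation}) to obtain $d(X)\le nw(X)\le nw(Q_{P}(X,Y))$. Your observation that the metrizability of $Y$ is not actually used is also accurate.
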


\begin{proof}
For a space $X$ the $d(X) \leq nw(X)$. Then by Theorem \eqref{t:network to pesudo}, we get  $d(X) \leq nw(X) \leq nw(Q_{P}(X,Y))$.
\end{proof}

\begin{theorem}
For regular space $X$, then $wc(X) \cdot log(nw(X)) \leq iw(Q_{P}(X))$.
\end{theorem}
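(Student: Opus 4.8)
The plan is to derive the bound by splitting the product and feeding in two of the inequalities already proved above, together with the elementary fact that for infinite cardinals $wc(X)\cdot\log(nw(X))=\max\{wc(X),\log(nw(X))\}$; both factors are infinite, since $wc(X)\ge\aleph_{0}$ by definition and $nw(X)\ge\aleph_{0}$ forces $\log(nw(X))\ge\aleph_{0}$. Hence it is enough to prove $wc(X)\le iw(Q_{P}(X))$ and $\log(nw(X))\le iw(Q_{P}(X))$ separately.

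For the first inequality I would apply Theorem \eqref{t:weak covering pseudo character} with the metric space $Y=\mathbb{R}$, which gives $wc(X)\le\psi(Q_{P}(X))$, and then combine this with the standard estimate $\psi(Z)\le iw(Z)$ valid for Tychonoff $Z$. The latter applies because $Q_{P}(X)$ is a subspace of $\mathbb{R}^{X}$, hence Tychonoff: if $\phi\colon Q_{P}(X)\to Z$ is a continuous bijection with $w(Z)=iw(Q_{P}(X))$ and $\mathcal{B}$ is a base of $Z$ with $|\mathcal{B}|=iw(Q_{P}(X))$, then, $Z$ being $T_{1}$ and $\phi$ bijective, the family $\{\phi^{-1}(B)\colon B\in\mathcal{B}\}$ of open subsets of $Q_{P}(X)$ satisfies $\bigcap\{\phi^{-1}(B)\colon B\in\mathcal{B},\ f\in\phi^{-1}(B)\}=\{f\}$ for every $f\in Q_{P}(X)$, so $\psi(Q_{P}(X))\le iw(Q_{P}(X))$. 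Thus $wc(X)\le\psi(Q_{P}(X))\le iw(Q_{P}(X))$.

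For the second inequality, put $\kappa=iw(Q_{P}(X))$ and fix a continuous bijection $\phi\colon Q_{P}(X)\to Z$ with $w(Z)=\kappa$, where $Z$ may be taken $T_{1}$. A base of $Z$ of cardinality $\kappa$ separates the points of $Z$, so each point of $Z$ is determined by the subfamily of that base containing it; hence $|Z|\le 2^{\kappa}$, and since $\phi$ is a bijection $|Q_{P}(X)|=|Z|\le 2^{\kappa}$. The singletons of $Q_{P}(X)$ form a network, so $nw(Q_{P}(X))\le|Q_{P}(X)|\le 2^{\kappa}$, and Corollary \eqref{c:network relation} (with $Y=\mathbb{R}$) yields $nw(X)\le nw(Q_{P}(X))\le 2^{\kappa}$; taking logarithms, $\log(nw(X))\le\kappa=iw(Q_{P}(X))$. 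Combining, $wc(X)\cdot\log(nw(X))=\max\{wc(X),\log(nw(X))\}\le iw(Q_{P}(X))$. In fact $\log(nw(X))$ can be improved to $nw(X)$, since $nw(X)\le\psi(Q_{P}(X))$ by Theorem \eqref{t:network to pesudo} and $\psi(Q_{P}(X))\le iw(Q_{P}(X))$ as above; only the logarithmic form is stated.

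The one point that needs care — rather than difficulty — is that the continuous bijection witnessing $iw(Q_{P}(X))$ should be taken onto a $T_{1}$ (indeed Tychonoff) space, since both $|Z|\le 2^{w(Z)}$ and $\psi(Z)\le iw(Z)$ rely on some separation of the target; this is legitimate precisely because $Q_{P}(X)\subseteq\mathbb{R}^{X}$ is Tychonoff, and the regularity hypothesis on $X$ enters only to make Theorem \eqref{t:weak covering pseudo character} and Corollary \eqref{c:network relation} applicable. Everything else is cardinal arithmetic applied to results already in hand; Lemma \eqref{l:Evalution map}, giving the continuity of the evaluation maps, additionally shows $iw(Q_{P}(X))\le|X|$, as those maps separate the points of $Q_{P}(X)$.
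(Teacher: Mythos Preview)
Your argument is correct and follows essentially the same route as the paper: both proofs invoke Theorem~\ref{t:weak covering pseudo character} to get $wc(X)\le\psi(Q_{P}(X))$ and Corollary~\ref{c:network relation} to get $nw(X)\le nw(Q_{P}(X))$, and then feed these into the general inequality $\psi(Z)\cdot\log(nw(Z))\le iw(Z)$. The only difference is one of exposition: the paper simply quotes that general inequality as a known fact for arbitrary $Z$, whereas you unpack it by proving the two halves $\psi\le iw$ and $\log(nw)\le iw$ separately, and you are careful to justify why the condensation target may be taken $T_{1}$ (which the paper leaves implicit). Your closing remarks---that one actually gets the stronger bound $nw(X)\le iw(Q_{P}(X))$ via Theorem~\ref{t:network to pesudo}, and that $iw(Q_{P}(X))\le|X|$ via the evaluation maps---are correct additional observations not present in the paper's proof.
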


\begin{proof}
For any space $Z$, $\psi(Z)\cdot log(nw(Z))\leq iw(Z)$. From Theorem \eqref{t:weak covering pseudo character}, we have $wc(X) \leq \psi(Q_{P}(X))$ and by Corollary \eqref{c:network relation} $nw(X) \leq nw(Q_{P}(X))$. Therefore,  $wc(X) \cdot log(nw(X)) \leq iw(Q_{P}(X))$.
\end{proof}

In general $wc(X) \cdot log(nw(X)) \neq iw(Q_{P}(X))$. For example, from \cite[Example 5.1]{Hola2020} we have $\aleph_{1} = \psi(Q_{P}(\mathbb{R})) \leq iw(Q_{P}(\mathbb{R}))$, $wc(\mathbb{R}) = \aleph_{0}$ and $nw(\mathbb{R}) = \aleph_{0}$, so we have $log(nw(\mathbb{R})) = \aleph_{0}$. This implies $wc(\mathbb{R}) \cdot log(nw(\mathbb{R}) )\neq iw(Q_{P}(\mathbb{R}))$.

\begin{theorem}
For Hausdorff space $X$ and $Y$ be a space, then $d(F_{P}(X,Y)) \leq w(X) \cdot d(Y)$.
\end{theorem}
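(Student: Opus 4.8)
\emph{Proof proposal.} The plan is to exhibit a dense subset of $F_{P}(X,Y)$ of cardinality at most $w(X)\cdot d(Y)$, built by hand out of a basis of $X$ and a dense subset of $Y$, using that a basic open set of the topology of point-wise convergence on $F_{P}(X,Y)$ depends on only finitely many coordinates. First I would fix a basis $\mathcal{B}$ for $X$ with $|\mathcal{B}|=w(X)$ and a dense set $D\subseteq Y$ with $|D|=d(Y)$; both cardinals are infinite under the running conventions.

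Next I would define the candidate dense set $\mathcal{D}$ as follows: for every finite family $\{B_{1},\dots,B_{n}\}$ of pairwise disjoint members of $\mathcal{B}$, every tuple $(d_{1},\dots,d_{n})\in D^{n}$, and every $d_{0}\in D$, put into $\mathcal{D}$ the function taking the value $d_{i}$ on $B_{i}$ and $d_{0}$ off $\bigcup_{i}B_{i}$ (well defined by disjointness, and a legitimate element of $F(X,Y)$ since no continuity is asked of it). The counting step is routine: $\mathcal{B}$ has $|\mathcal{B}|^{<\aleph_{0}}=w(X)$ finite subfamilies, and for each of them at most $|D|^{<\aleph_{0}}=d(Y)$ admissible labellings, so $|\mathcal{D}|\le w(X)\cdot d(Y)$.

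The core of the argument is density. Given a nonempty basic neighbourhood $[x_{1},\dots,x_{m};U_{1},\dots,U_{m}]$ (so the $x_{j}$ may be taken distinct and each $U_{j}$ nonempty open), I would use that $X$ is Hausdorff to separate $x_{1},\dots,x_{m}$ by pairwise disjoint open sets $O_{1},\dots,O_{m}$ with $x_{j}\in O_{j}$, then shrink each to a basic set $B_{j}\in\mathcal{B}$ with $x_{j}\in B_{j}\subseteq O_{j}$ (the $B_{j}$ remaining pairwise disjoint); density of $D$ supplies $d_{j}\in D\cap U_{j}$, and any $d_{0}\in D$ will do. The function associated with $(B_{1},\dots,B_{m};d_{1},\dots,d_{m};d_{0})$ lies in $\mathcal{D}$ and satisfies $f(x_{j})=d_{j}\in U_{j}$, hence lies in the given neighbourhood. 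Thus $\mathcal{D}$ is dense and $d(F_{P}(X,Y))\le|\mathcal{D}|\le w(X)\cdot d(Y)$.

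The only point needing a little care, though not a genuine obstacle, is that a finite subset of a Hausdorff space has pairwise disjoint open neighbourhoods: this is the usual finite induction, intersecting over $j\neq i$ the $T_{2}$-separating neighbourhood of $x_{i}$ from $x_{j}$. Everything else is cardinal bookkeeping. (Alternatively, one could deduce the bound from the Hewitt--Marczewski--Pondiczery theorem applied to $F_{P}(X,Y)\cong Y^{|X|}$, together with $|X|\le 2^{w(X)}$, which holds because a Hausdorff space is $T_{0}$ and $x\mapsto\{B\in\mathcal{B}\colon x\in B\}$ is then injective; but the explicit construction above is shorter and self-contained.)
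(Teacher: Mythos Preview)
Your argument is correct. The paper, however, takes a different route: rather than constructing a dense set directly, it quotes two results of Kumar and Tyagi, namely that $Q_{P}(X,Y)$ is dense in $F_{P}(X,Y)$ when $X$ is Hausdorff, and that $d(Q_{P}(X,Y))\le w(X)\cdot d(Y)$; the inequality then follows from the general fact that $d(Z)\le d(S)$ whenever $S$ is dense in $Z$. So the paper's proof is a two-line deduction that routes through the space of quasi-continuous functions, whereas yours is a direct, self-contained construction inside $F_{P}(X,Y)$ that never mentions quasi-continuity at all. Your approach is more elementary and stands independently of the $Q_{P}$ machinery; the paper's approach is shorter given the cited results and, as a byproduct, shows that the witnessing dense set can be taken inside $Q_{P}(X,Y)$.
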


\begin{proof}
For any space $X$ and dense subset $S$ of $X$, $d(X) \leq d(S)$. By \cite[Theorem 4.15]{Kumar2022}, we have $Q_{P}(X,Y)$ which is dense in $F_{P}(X,Y)$. Therefore $d(F_{P}(X,Y)) \leq d(Q_{P}(X,Y))$. Also from \cite[Theorem 4.10]{Kumar2022}, $d(Q_{P}(X,Y)) \leq w(X) \cdot d(Y)$. Thus $d(F_{P}(X,Y)) \leq w(X) \cdot d(Y)$.
\end{proof}
\section{Mappings on the spaces $Q_{P}(X)$ and $Q_{P}(X,Y)$} \label{s:Mappings on the spaces qpx and qpxy}
Initially, in \cite[Chapter-II]{McCoy1988}, McCoy and Ntantu investigated some maps on the space $C_{P}(X,Y)$. Continuing this work, Kumar and Tyagi studied some maps on the space $Q_{P}(X,Y)$ in \cite{Kumar2022}. Now, in this section we study properties of maps on $Q_{P}(X)$ and $Q_{P}(X,Y)$ spaces.

\begin{theorem}
Let $X$ be a Hausdorff space and $Y$ be an open subspace of $X$. A restriction map $\pi_{Y} \colon Q_{P}(X) \to Q_{P}(Y)$ is defined by $\pi_{Y}(f) = f|Y$ for all $f \in Q_{P}(X)$. Then $\pi_{Y}$ is open continuous and $\pi_Y(Q_{P}(X)) = Q_{P}(Y)$.
\end{theorem}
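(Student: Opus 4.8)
The plan is to establish four facts in turn: that $\pi_Y$ is well defined, that it is continuous, that it is open, and that its range is all of $Q_P(Y)$. The first two are routine and I would dispose of them quickly. \emph{Well-definedness}: if $f \in Q_P(X)$ and $y \in Y$, then for open sets $U \ni y$ in $Y$ and $V \ni f(y)$ in $\mathbb{R}$, the set $U$ is open in $X$ because $Y$ is open, so quasi-continuity of $f$ at $y$ yields a non-empty open $G \subseteq U$ with $f(G) \subseteq V$, and $G$ is open in $Y$; hence $f|Y \in Q(Y)$. \emph{Continuity}: for a subbasic set $[y,V]$ of $Q_P(Y)$ one checks that $\pi_Y^{-1}([y,V]) = \{f \in Q_P(X) : f(y) \in V\}$, which is exactly the subbasic set $[y,V]$ of $Q_P(X)$; so $\pi_Y$ is continuous.

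The heart of the argument is surjectivity, i.e. extending a given $g \in Q_P(Y)$ to some $f \in Q_P(X)$ with $f|Y = g$. I would define $f$ to agree with $g$ on $Y$ and to equal $0$ on $X \setminus Y$ (adjusting the constant at boundary points if needed, see below), and verify $f$ is quasi-continuous. At a point of the open set $Y$ this is the well-definedness computation run in reverse. At a point $x$ of the open set $X \setminus \overline{Y}$ the function $f$ is identically $0$ on a neighbourhood, so Lemma~\eqref{l:Condition for quasicont} applies. The remaining, delicate case is $x \in \overline{Y} \setminus Y$: given a basic neighbourhood $U$ of $x$ and a neighbourhood $V$ of $f(x)$, I must produce a non-empty open $G \subseteq U$ with $f(G) \subseteq V$. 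If $U$ meets the exterior $X \setminus \overline{Y}$, then $U \cap (X \setminus \overline{Y})$ serves as such a $G$, with $f(x)$ taken to be $0$; otherwise $U \subseteq \overline{Y}$, so $U \cap Y$ is dense in $U$, and I would invoke quasi-continuity of $g$ together with completeness of $\mathbb{R}$ to locate a non-empty open $G \subseteq U \cap Y$ on which $g$ is confined to a small interval, choosing $f(x)$ so that these intervals shrink to $f(x)$ as $U$ shrinks to $x$.

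For openness, let $B = [x_1,\dots,x_n;U_1,\dots,U_n]$ be a non-empty basic open set of $Q_P(X)$; after reindexing assume $x_1,\dots,x_k \in Y$ and $x_{k+1},\dots,x_n \notin Y$. I claim $\pi_Y(B)$ equals the basic open set $[x_1,\dots,x_k;U_1,\dots,U_k]$ of $Q_P(Y)$, and equals $Q_P(Y)$ when $k=0$. The inclusion $\subseteq$ is immediate from the definition of $\pi_Y$. For $\supseteq$, take $g$ in that set; using the extension procedure from the surjectivity step I build $f \in Q_P(X)$ with $f|Y = g$, and since the points $x_{k+1},\dots,x_n$ lie outside $Y$ I am free to re-specify $f$ there, inside the open sets $U_{k+1},\dots,U_n$, without disturbing $f|Y$, re-checking quasi-continuity at those finitely many added points. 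Then $f \in B$ and $\pi_Y(f) = g$, so $\pi_Y(B) \supseteq [x_1,\dots,x_k;U_1,\dots,U_k]$, and $\pi_Y$ is open.

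The main obstacle is the extension step that underlies both surjectivity and openness: given $g$ quasi-continuous on the open set $Y$, one must verify that the zero-extension, suitably tweaked at boundary points, is quasi-continuous at every point of $\overline{Y} \setminus Y$. The interior points of $Y$ and the exterior $X \setminus \overline{Y}$ are easy; it is the boundary points, and in particular the task of selecting a single value $f(x)$ that works simultaneously for every neighbourhood of such an $x$, that require genuine care, and this is precisely where Lemma~\eqref{l:Condition for quasicont} and the quasi-continuity of $g$ must be used most attentively.
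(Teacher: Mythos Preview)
Your proposal correctly isolates the extension step at boundary points $x\in\overline{Y}\setminus Y$ as the crux, but the sketch you give there cannot be completed. You propose to ``choose $f(x)$ so that these intervals shrink to $f(x)$ as $U$ shrinks to $x$,'' yet nothing guarantees such a value exists: $g$ may escape to infinity as one approaches a boundary point of $Y$. Concretely, take $X=\mathbb{R}$, $Y=\mathbb{R}\setminus\{0\}$ (open in $X$), and $g(x)=1/x$. Then $g$ is continuous, hence quasi-continuous, on $Y$; but for any choice $f(0)=c\in\mathbb{R}$, pick $U=(-\delta,\delta)$ with $\delta<1/(|c|+1)$ and $V=(c-1,c+1)$. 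Every nonempty open $G\subseteq U$ contains some $x_0\neq 0$, and then $|f(x_0)|=|1/x_0|>1/\delta>|c|+1$, so $f(G)\not\subseteq V$. Hence no quasi-continuous extension of $g$ exists, $\pi_Y$ is not onto, and your openness argument---which reuses the extension construction to show $\pi_Y(B)\supseteq[x_1,\dots,x_k;U_1,\dots,U_k]$---collapses with it.

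For comparison, the paper's own proof sets $h=g$ on $Y$ and $h\equiv 1$ on $X\setminus Y$, then argues that for $x\in X\setminus Y$ one has $h(y)=h(x)$ for all $y\in\overline{X\setminus Y}=X\setminus Y$ and invokes Lemma~\ref{l:Condition for quasicont}. But that lemma requires an \emph{open} set $U$ with $x\in\overline{U}$ on which $h$ is constant; $X\setminus Y$ is merely closed, and in the example above it has empty interior, so no such $U$ is available. Thus the paper's argument has exactly the same gap you flagged as ``requiring genuine care,'' and the surjectivity claim $\pi_Y(Q_P(X))=Q_P(Y)$ is in fact false without an additional hypothesis (for instance, that $X\setminus Y$ be regular closed).
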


\begin{proof}
The restriction map $\pi_{Y} \colon Q_{P}(X) \to Q_{P}(Y)$ is continuous since it is a projection map. For non-empty open subset $Y$ of $X$, we prove $\pi_{Y}(Q_{P}(X)) = Q_{P}(Y)$. It is oblivious $\pi_{Y}(Q_{P}(X)) \subset Q_{P}(Y)$. Since $Y$ is open subspace of $X$ and $g\in Q(Y)$, now we define a function $h:X\to \mathbb{R}$ as follows
\[h(x)=
\begin{cases}
g(x), & \text{if}\,  x \in Y; \\
1, &    X\setminus Y.
\end{cases}
\]
Clearly $\pi_{Y}(h) = h|Y = g$. Let any $x\in X$. Since $h|Y=g$ and $g$ is quasi-continuous. Therefore, for $x\in Y$ function $h$ is quasi-continuous.  If $x\in X\setminus Y$, then $h(x)=h(y)$ for all $y\in \overline{X\setminus Y}=X\setminus Y$ (Since $X\setminus Y$ is closed). Hence by Lemma \eqref{l:Condition for quasicont} $h$ is quasi-continuous at $x$. Finally we have $h$ is quasi-continuous. Next, First, we prove that $\pi_{Y}$ is open. Let $W(f, \{x_{1}, \dots, x_{k}\}, \epsilon)$ is open in $Q_{P}(X)$, where $k \in \mathbb{N}$. We claim $\pi_{Y}(W(f, \{x_{1}, \dots, x_{k}\}, \epsilon))$ is open in $Q_{P}(Y)$. For this we prove
\[\pi_{Y}(W(f, \{x_{1}, \dots, x_{k}\}, \epsilon)) = W(\pi_{Y}(f), \{x_{1}, \dots, x_{k}\}, \epsilon),\]
where $x_{i} \in Y$ for all $1 \leq i \leq k$. By definition of topology of point-wise convergence $W(\pi_{Y}(f), \{x_{1}, \dots, x_{k}\}, \epsilon)\subset Q_{P}(Y)$. By definition of map $\pi_{Y}$,
\[\pi_{Y}(W(f, \{x_{1}, \dots, x_{k}\}, \epsilon))\subset Q_{P}(Y)\]
Let $g \in W(f, \{x_{1}, \dots, x_{k}\}, \epsilon)$ then by definition of open set in $Q_{P}(X)$, $|g(x_{i}) - f(x_{i})| < \epsilon$ for all either $i = 1, 2, \dots, k$ or $i = 1, 2, \dots, l$, where $l \leq k$.

If $x \in Y$, implies $\pi_{Y}(g)(x) = g(x)$ and $\pi_{Y}(f)(x) = f(x)$. This show that $\pi_{Y}(g) \in  W(\pi_{Y}(f), \{x_{1}, \dots, x_{l}\}, \epsilon)$, therefore
\[\pi_{Y}(W(f, \{x_{1}, \dots, x_{k}\}, \epsilon))\subset W(\pi_{Y}(f), \{x_{1}, \dots, x_{l}\}, \epsilon)\]
Remain to prove $\pi_{Y}(W(\pi_{Y}(f), \{x_{1}, \dots, x_{k}\}, \epsilon))\supset W(\pi_{Y}(f), \{x_{1}, \dots, x_{l}\}, \epsilon)$. Take $g \in Q_{P}(Y)$ so there exists a $g_{1} \in Q_{P}(X)$ such that $g = \pi_{Y}(g_{1})$. Let
\[
m(x) =
\begin{cases}
g_{1}(x) & \text{if $x \notin {\bigcup_{i = l + 1}^{k}} \, \overline{V^{*}_{i}}$};\\
f(x) & \text{if $x \in {\bigcup_{i = l + 1}^{k}}\, \overline{V^{*}_{i}}$}.
 \end{cases}
\]
where $V^{*}_{i} = V_{i}\cap (X \backslash Y)$  and $V_{i}$ are disjoint open set containing point $x_{i}$, then  $m$ is quasi-continuous. Since $\pi_{Y}(m) = \pi_{Y}(g_{1}) = g$ implies that $m \in W(f, \{x_{1}, \dots, x_{k}\}, \epsilon)$ and $g\in \pi_{Y}(W(f, \{x_{1}, \dots, x_{k}\}, \epsilon))$. So,
\[\pi_{Y}(W(\pi_{Y}(f), \{x_{1}, \dots, x_{k}\}, \epsilon))\supset W(\pi_{Y}(f), \{x_{1}, \dots, x_{l}\}, \epsilon)\]
\end{proof}

As we know from the result \cite[Theorem, 1.2.4]{Neubrunn1988} that semi-continuity and quasi-continuity are equivalence for single-valued functions. Now from the result \cite[Remark, 13]{Levine1963} and  \eqref{t:product of cont and qcount is qcount} it is clear that the product of two quasi-continuous functions is not quasi-continuous in general but on the other hand the product of continuous and quasi-continuous function is quasi-continuous respectively. In general, the product $f \cdot g\colon X \to \mathbb{R}$ of two real-valued functions $f, g \colon X \to \mathbb{R}$ on the set $X$ is defined by $(f \cdot g)(x) = f(x) g(x)$.

\begin{theorem}\label{t:product of cont and qcount is qcount}
The product of continuous and quasi-continuous functions is quasi-continuous.
\end{theorem}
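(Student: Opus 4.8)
The plan is to verify quasi-continuity of $h = f \cdot g$ directly from the definition, one point at a time, exploiting the fact that $f$ is \emph{continuous} (not merely quasi-continuous) to control $f$ on an entire neighbourhood of the point in question, while invoking quasi-continuity of $g$ only once, at the very end, to produce the required open set.

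First I would fix $x \in X$ together with open sets $U \subseteq X$ with $x \in U$ and $V \subseteq \mathbb{R}$ with $h(x) = f(x)g(x) \in V$. Choose $\delta > 0$ with $(f(x)g(x) - \delta,\, f(x)g(x) + \delta) \subseteq V$, and then, using joint continuity of multiplication $\mathbb{R}^2 \to \mathbb{R}$ at $(f(x), g(x))$, pick $\epsilon > 0$ such that $|s - f(x)| < \epsilon$ and $|t - g(x)| < \epsilon$ together force $|st - f(x)g(x)| < \delta$. Next, continuity of $f$ at $x$ supplies an open set $U_1$ with $x \in U_1$ and $f(U_1) \subseteq (f(x) - \epsilon,\, f(x) + \epsilon)$; put $U' = U \cap U_1$, which is open and contains $x$. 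Finally, applying quasi-continuity of $g$ at $x$ to the open set $U'$ and the open interval $(g(x) - \epsilon,\, g(x) + \epsilon) \ni g(x)$, I obtain a non-empty open set $G \subseteq U'$ with $g(G) \subseteq (g(x) - \epsilon,\, g(x) + \epsilon)$.

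It then remains only to check that $G$ does the job: for each $y \in G$ we have $y \in U_1$, hence $|f(y) - f(x)| < \epsilon$, and $y \in G$, hence $|g(y) - g(x)| < \epsilon$; by the choice of $\epsilon$ this yields $|f(y)g(y) - f(x)g(x)| < \delta$, so $h(y) \in V$. Thus $G$ is a non-empty open subset of $U$ with $h(G) \subseteq V$, which is exactly quasi-continuity of $h$ at $x$; since $x$ was arbitrary, $h$ is quasi-continuous on $X$.

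The step I expect to be the crux — and the reason the hypothesis cannot be relaxed to ``$f$ quasi-continuous'' — is the use of continuity of $f$ to keep $f$ within $\epsilon$ of $f(x)$ on the whole neighbourhood $U_1$, and in particular on the small set $G$ later extracted from $U'$. If one only knew $f$ to be quasi-continuous, one could still produce an open set on which $f$ is close to $f(x)$, but that set need not contain $x$, so quasi-continuity of $g$ could not be applied relative to it, and the two ``good'' open sets for $f$ and for $g$ cannot in general be reconciled. Everything else is routine $\epsilon$–$\delta$ bookkeeping around the continuity of multiplication on $\mathbb{R}$.
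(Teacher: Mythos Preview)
Your argument is correct: you shrink the given neighbourhood $U$ using continuity of $f$, then invoke quasi-continuity of $g$ on the shrunken set, and the $\epsilon$--$\delta$ bookkeeping via continuity of multiplication on $\mathbb{R}$ is accurate. The only minor remark is stylistic: the intermediate $\delta$ is not really needed, since one can work directly with an $\epsilon$ for which the product lands in $V$, but this changes nothing of substance.

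As for comparison with the paper: there is nothing to compare. The paper states this theorem without proof, treating it as a known fact (the surrounding discussion cites \cite[Remark 13]{Levine1963} and the equivalence of quasi-continuity with semi-continuity for single-valued maps). Your write-up therefore supplies a self-contained proof where the paper offers none, and your closing paragraph correctly identifies why the hypothesis that $f$ be continuous, rather than merely quasi-continuous, is essential.
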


Now we are going to find out the condition under which the map defined on the product of $C_{P}(X)$ and $Q_{P}(X)$ into $Q_{P}(X)$ is continuous.

\begin{theorem}
The map $q \colon C_{P}(X) \times Q_{P}(X) \to Q_{P}(X)$ defined by $q(f, g) = f \cdot g$ for a space $X$ is a continuous map.
\end{theorem}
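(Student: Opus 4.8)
The plan is to first invoke Theorem \eqref{t:product of cont and qcount is qcount} to guarantee that $q$ is well defined, i.e., that $f \cdot g \in Q_{P}(X)$ whenever $f \in C_{P}(X)$ and $g \in Q_{P}(X)$: since every continuous function is quasi-continuous, this is precisely the assertion that a product of a continuous and a quasi-continuous function is quasi-continuous. With well-definedness in hand, continuity will be verified against a subbase for $Q_{P}(X)$.

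Recall that $\mathcal{S}' = \{[x,U] \colon x \in X,\ U\ \text{open in}\ \mathbb{R}\}$ is a subbase for the topology of point-wise convergence on $Q(X)$, so it suffices to show that $q^{-1}([x,U])$ is open in $C_{P}(X) \times Q_{P}(X)$ for every $x \in X$ and every open $U \subseteq \mathbb{R}$. Writing out the definition, $q^{-1}([x,U]) = \{(f,g) \colon f(x)\,g(x) \in U\}$, and I will realize this as the preimage of $U$ under the composite
\[
C_{P}(X) \times Q_{P}(X) \xrightarrow{\ e_{x} \times e_{x}\ } \mathbb{R} \times \mathbb{R} \xrightarrow{\ \mu\ } \mathbb{R},
\]
where $e_{x}$ denotes evaluation at $x$ on the respective function space and $\mu(s,t) = st$. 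The evaluation $e_{x} \colon Q_{P}(X) \to \mathbb{R}$ is continuous by Lemma \eqref{l:Evalution map}, and $e_{x} \colon C_{P}(X) \to \mathbb{R}$ is continuous by the same argument (the set $[x,U]$ is subbasic), so $e_{x} \times e_{x}$ is continuous; and $\mu$ is continuous on $\mathbb{R} \times \mathbb{R}$. Hence $q^{-1}([x,U]) = \bigl(\mu \circ (e_{x} \times e_{x})\bigr)^{-1}(U)$ is open, and $q$ is continuous.

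An alternative, working with basic neighbourhoods, runs as follows: given $(f_{0},g_{0})$ and a basic neighbourhood $W(f_{0}\cdot g_{0}, A, \epsilon)$ of $q(f_{0},g_{0})$ with $A = \{x_{1},\dots,x_{n}\} \in \mathcal{F}$, use the joint continuity of $\mu$ at each point $(f_{0}(x_{i}),g_{0}(x_{i}))$ to pick $\delta_{i}>0$ with $|st - f_{0}(x_{i})g_{0}(x_{i})| < \epsilon$ whenever $|s-f_{0}(x_{i})|<\delta_{i}$ and $|t-g_{0}(x_{i})|<\delta_{i}$; then $\delta = \min_{i}\delta_{i}$ gives $q\bigl(W(f_{0},A,\delta) \times W(g_{0},A,\delta)\bigr) \subseteq W(f_{0}\cdot g_{0}, A, \epsilon)$. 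Either way, the only genuine input beyond routine bookkeeping is the well-definedness of $q$, which is exactly why Theorem \eqref{t:product of cont and qcount is qcount} is placed first; the rest reduces to the standard continuity of point evaluations for the point-wise topology together with the continuity of multiplication on $\mathbb{R}$. I do not anticipate a serious obstacle, the only point requiring a little care being that the neighbourhoods in the two coordinates must be taken over the same finite set $A$ so that a single $\delta$ works simultaneously at every $x_{i}$.
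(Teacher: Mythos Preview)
Your proposal is correct. Your second alternative is essentially the paper's proof: the paper fixes $(f_{0},g_{0})$, takes a basic neighbourhood $W(f_{0}g_{0},\{x_{1},\dots,x_{n}\},\epsilon)$, and produces an explicit single $\delta$ by setting $M=\sum_i|f_{0}(x_{i})|+\sum_i|g_{0}(x_{i})|+3$ and $\delta=\min\{\epsilon/(2M),1\}$, then checks $|f(x_i)g(x_i)-f_{0}(x_i)g_{0}(x_i)|<\epsilon$ by the usual add-and-subtract trick. In other words, the paper simply makes the ``joint continuity of $\mu$'' step in your second argument quantitative.

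Your first route, factoring $q^{-1}([x,U])$ through $e_{x}\times e_{x}$ and the multiplication map $\mu$ on $\mathbb{R}\times\mathbb{R}$, is a genuinely cleaner alternative that the paper does not use. It trades the explicit $\epsilon$--$\delta$ bookkeeping for the standard facts that continuity may be tested on a subbase of the codomain, that point evaluations are continuous for the point-wise topology (Lemma \ref{l:Evalution map}), and that $\mu$ is continuous; this makes the argument shorter and immediately generalisable to any topological ring in place of $\mathbb{R}$. The paper's explicit constants, on the other hand, give a self-contained computation that does not appeal to the subbase lemma. Either way, as you note, the only nontrivial ingredient is Theorem \ref{t:product of cont and qcount is qcount} ensuring $q$ lands in $Q_{P}(X)$.
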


\begin{proof}
Let $h_{0} = (f_{0}, g_{0})$ be any point in $C_{P}(X) \times Q_{P}(X)$. Let us choose an open subset $V$ of $Q_{P}(X)$ that contain $q(h_{0}) = f_{0} \cdot g_{0}$. So, by the definition of topology of point-wise convergence there exists a subset $\{x_{1}, \dots, x_{n}\}$ of the space $X$ such that $q(h_{0})\in W(q(h_{0}),\{x_{1}, \dots, x_{n}\}, \epsilon)\subset V$, where $n \in \mathbb{N}$ and $\epsilon > 0$. Take
\[M = \sum_{i=1}^{n}|f_{0}(x_i)| + \sum_{i=1}^{n}|g_{0}(x_{i})| + 3 \,  \text{and}\, \delta = \min \left \{\dfrac{\epsilon}{2M},1 \right \}\]
Choose a open subset $O = O_{1} \times O_{2}$ of $C_{P}(X) \times Q_{P}(X)$ containing $h_0$, where $O_{1} = W(f_{0},\{x_{1}, \dots, x_{n}\}, \delta)$ and $O_{2} = W(g_{0},\{x_{1}, \dots, x_{n}\}, \delta)$ are open neighborhood of $f_{0}$ and $g_{0}$ in $C_{P}(X)$ and $Q_{P}(X)$ respectively. Therefore, for any $h = (f,g)\in O$ and for all $i \leq n$, where $n \in \mathbb{N}$ we have,
\[|g(x_{i})| < 1 + |g_{0}(x_{i})|< M \, \text{and} \, |f(x_{i})| < 1 + |f_{0}(x_{i})|< M \]
Now
\begin{align*}
|q(h)(x_{i}) - q(h_{0})(x_{i})| & = |(f\cdot g)(x_{i}) - (f_{0} \cdot g_{0})(x_{i})| = |f(x_{i}) \cdot g(x_{i}) - f_{0}(x_{i}) \cdot g_{0}(x_{i})| \\
& = |f(x_{i}) \cdot g(x_{i}) - f_{0}(x_{i}) \cdot g(x_{i}) + f_{0}(x_{i}) \cdot  g(x_{i}) - f_{0}(x_{i}) \cdot g_{0}(x_{i})| \\
& \leq|g(x_{i})||(f(x_{i}) - f_{0}(x_{i}))| + |f_{0}(x_{i})||(g(x_{i}) - g_{0}(x_{i}))|\\
&  < M \cdot \frac{\epsilon}{2M} + M \cdot \frac{\epsilon}{2M} < \epsilon, \, \text{for all}\, i \leq n \\
|q(h)(x_{i}) - q(h_{0})(x_{i})| & < \epsilon
\end{align*}
This shows that $q(h) = f \cdot g \in V$ and $q(O)\subset V$, therefore map $q$ is continuous.
\end{proof}

In \cite{Kumar2022}, Kumar and Tyagi studied the continuity of an induced map on $Q_{P}(X,Y)$. In continuation, we prove the denseness of the image of the space $Q_{P}(X,Y)$ under the induced induced map in the next result.

\begin{theorem}
Let $r \colon Y \to Z$ be a surjective continuous map and induced map $r_{\ast} \colon Q_{P}(X,Y) \to Q_{P}(X,Z)$ defined by $r_{\ast}(f) = r \circ f$. Then the space $r_{\ast}(Q_{P}(X,Y))$ is dense in the space $Q_{P}(X,Z)$.
\end{theorem}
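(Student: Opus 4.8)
The plan is to avoid building quasi-continuous functions on $X$ by hand and instead push the problem into the full function space $F_{P}(X,Z)$, where everything reduces to a statement about product spaces. First I would observe that $Q_{P}(X,Z)$ carries the subspace topology inherited from $F_{P}(X,Z)$, so it suffices to prove the \emph{stronger} assertion that $r_{\ast}(Q_{P}(X,Y))$ is dense in $F_{P}(X,Z)$: a subset of $Q_{P}(X,Z)$ that is dense in $F_{P}(X,Z)$ is automatically dense in $Q_{P}(X,Z)$, since every nonempty open set of $Q_{P}(X,Z)$ is the trace of a (necessarily nonempty) open set of $F_{P}(X,Z)$. I would also record that $r_{\ast}$ is well defined and lands inside $Q_{P}(X,Z)$, because the composition of the continuous map $r$ with a quasi-continuous function is again quasi-continuous (a routine check from the definition: pull an open set around $r(f(x))$ back through $r$).

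Next I would extend $r_{\ast}$ to the map $\hat{r}\colon F_{P}(X,Y)\to F_{P}(X,Z)$, $\hat{r}(f)=r\circ f$. Identifying $F_{P}(X,Y)$ and $F_{P}(X,Z)$ with the product spaces $Y^{X}$ and $Z^{X}$ (the topology of point-wise convergence being exactly the product topology), $\hat{r}$ is the coordinate-wise application of $r$, hence continuous, since $\pi_{x}\circ\hat{r}=r\circ\pi_{x}$ for every $x\in X$. Moreover $\hat{r}$ is onto: given $h\in F_{P}(X,Z)$, surjectivity of $r$ lets us pick, for each $x$, a point $g(x)\in r^{-1}(h(x))$, and then $\hat{r}(g)=h$. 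Finally, the restriction of $\hat{r}$ to $Q_{P}(X,Y)$ is precisely $r_{\ast}$, so $\hat{r}(Q_{P}(X,Y))=r_{\ast}(Q_{P}(X,Y))$.

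Then I would invoke the density of $Q_{P}(X,Y)$ in $F_{P}(X,Y)$, which is \cite[Theorem 4.15]{Kumar2022} (this is where a separation hypothesis on $X$, namely Hausdorffness, enters). Continuity of $\hat{r}$ gives $\hat{r}\bigl(\overline{Q_{P}(X,Y)}\bigr)\subseteq\overline{\hat{r}(Q_{P}(X,Y))}=\overline{r_{\ast}(Q_{P}(X,Y))}$; since $\hat{r}$ is onto and $\overline{Q_{P}(X,Y)}=F_{P}(X,Y)$, the left-hand side equals $F_{P}(X,Z)$. Hence $F_{P}(X,Z)\subseteq\overline{r_{\ast}(Q_{P}(X,Y))}$, i.e. $r_{\ast}(Q_{P}(X,Y))$ is dense in $F_{P}(X,Z)$, and therefore dense in $Q_{P}(X,Z)$ by the reduction in the first paragraph.

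The only genuinely load-bearing ingredient is the density of $Q_{P}(X,Y)$ in $F_{P}(X,Y)$; continuity and surjectivity of $\hat{r}$ and the passage of density to a subspace are soft. Consequently I expect the main obstacle to be exactly the point where a hypothesis on $X$ is unavoidable: the statement as written imposes none, but the argument above needs $X$ Hausdorff, hidden inside the cited density result. A direct alternative — for a basic neighbourhood $[x_{1},\dots,x_{n};W_{1},\dots,W_{n}]$ of a point of $Q_{P}(X,Z)$, use surjectivity of $r$ to choose target values $y_{i}\in r^{-1}(W_{i})$ and then Lemma \eqref{l:Condition for quasicont} (or Lemma \eqref{l:Quasicontinuous function}) to realise them by some $g\in Q_{P}(X,Y)$ — runs into the same need to separate the points $x_{i}$ inside $X$, and so does not sidestep the issue.
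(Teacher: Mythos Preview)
Your argument is correct and takes a genuinely different route from the paper. The paper argues directly at the level of basic open sets: given a basic neighbourhood $[x_{1},\dots,x_{n};V_{1},\dots,V_{n}]$ in $Q_{P}(X,Z)$, it pulls each $V_{i}$ back through $r$ to a nonempty open $U_{i}=r^{-1}(V_{i})\subset Y$ and then observes that any $f\in[x_{1},\dots,x_{n};U_{1},\dots,U_{n}]\subset Q_{P}(X,Y)$ satisfies $r_{\ast}(f)\in[x_{1},\dots,x_{n};V_{1},\dots,V_{n}]$ --- this is precisely the ``direct alternative'' you outline in your final paragraph. Your main argument instead lifts everything to the full function spaces $F_{P}(X,\cdot)$, invokes the density of $Q_{P}(X,Y)$ in $F_{P}(X,Y)$ from \cite[Theorem~4.15]{Kumar2022}, and pushes that density forward along the continuous surjection $\hat r$. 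Your route is more conceptual and makes the external input explicit; the paper's route is shorter but silently uses that $[x_{1},\dots,x_{n};U_{1},\dots,U_{n}]$ is nonempty in $Q_{P}(X,Y)$, which is exactly where the same Hausdorff hypothesis on $X$ is hiding. Your observation that the theorem as stated omits a separation assumption on $X$ needed by either argument is well taken.
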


\begin{proof}
To prove $r_{\ast}(Q_{P}(X,Y))$ is dense in $Q_{P}(X,Z)$ we show that for any $g \in Q_{P}(X,Z)$ there exists an open subset  $[x_{1}, \dots, x_{n}\colon V_{1}, \dots, V_{n}]$ of $Q_{P}(X,Z)$ containing $g$ such that $[x_{1}, \dots, x_{n}; V_{1}, \dots, V_{n}] \cap  r_{\ast}(Q_{P}(X,Y)) \neq \emptyset$, where $x_{i} \in X$ and $V_{i}$ open in the space $Z$. By definition of continuity and surjectivity of the map $r$, $r^{-1}(V_{i}) = U_{i}$ for some open set $U_{i}$ in $Y$, where $i \leq n$. Therefore, for any $f$ of $[x_{1}, \dots, x_{n}\colon U_{1}, \dots, U_{n}]\subset Q_{P}(X,Y)$, $r(f(x_{i})) = r_{\ast}(f)(x_{i})\in V_{i}$ for all $i\leq n$. So, $r_{\ast}(f) \in [x_{1}, \dots, x_{n}\colon V_{1}, \dots, V_{n}]$. Therefor, $[x_{1}, \dots, x_{n}; V_{1}, \dots, V_{n}] \cap  r_{\ast}(Q_{P}(X,Y)) \neq \emptyset$.
\end{proof}

\section{Conclusion}\label{sec13}
In this paper, we have studied cardinal invariants in the context of $Q_{P}(X,Y)$-spaces by examining the relationships among pseudocharacter, network weight, weight, and tightness. Our study demonstrates that the pseudocharacter of $Q_{P}(X,Y)$ dominates the network weight, density, and weak covering number of a regular space $X$, and we provided criteria $X$ is ordered Hausdorff space for that the weight and network weight of $Q_{P}(X)$ coincide. Also, we found that a regular space is regular whenever the  pseudocharacter of a compact subset of space $Q_{P}(X,Y)$ is countable. Furthermore, we analyzed the openness of the restriction map on $Q_{P}(X)$ and proved the image of $Q_{P}(X,Y)$ under the induced map is dense in $Q_{P}(X,Z)$. Our findings will enhance the study of cardinal invariants and properties of $Q_{P}(X,Y)$-spaces.

\bibliography{sn-bibliography}% common bib file

\end{document}